\let\ol=\overline
\let\cl@chapter\undefined
\newtheorem{theorem}{Theorem}[section]
\newtheorem{lemma}[theorem]{Lemma}
\newtheorem{corollary}[theorem]{Corollary}
\newtheorem{proposition}[theorem]{Proposition}
\theoremstyle{definition}
\newtheorem{example}[theorem]{Example}
\newtheorem{remark}[theorem]{Remark}
\newcommand{\Aut}{{\rm Aut}}
\newcommand{\Sub}{{\rm Sub}}
\newcommand{\N}{{\mathbb N}}
\newcommand{\Z}{{\mathbb Z}}
\newcommand{\R}{{\mathbb R}}
\newcommand{\C}{{\mathbb C}}
\newcommand{\Q}{{\mathbb Q}}
\newcommand{\HH}{{\mathcal H}}
\newcommand{\D}{{\mathcal D}}
\newcommand{\TT}{\mathbb{T}}
\newcommand{\T}{\mathcal{T}}
\newcommand{\Id}{{\rm Id}}
\newcommand{\Homeo}{{\rm Homeo}}
\newcommand{\db}{\mathbf{d}}
\newcommand{\Inn}{{\rm Inn}}
\newcommand{\inn}{{\rm inn}}
\newcommand{\GL}{{\rm GL}}
\begin{document}

\title{Expansive Actions of Automorphisms of \\ Locally Compact Groups $G$ on Sub$_G$}

\author{Manoj B.\ Prajapati and Riddhi Shah}       

\maketitle
\begin{abstract}
For a locally compact metrizable group $G$, we consider the action of $\Aut(G)$ on $\Sub_G$, the space of all closed subgroups of $G$ 
endowed with the Chabauty topology. We study the structure of groups $G$ admitting automorphisms $T$ which act expansively on $\Sub_G$.
We show that such a group $G$ is necessarily totally disconnected, $T$ is expansive and that the contraction groups of $T$ and $T^{-1}$ are closed and their 
product is open in $G$; moreover, if $G$ is compact, then $G$ is finite. We also obtain the structure of the contraction group of such $T$. 
For the class of groups $G$ which are finite direct products of $\Q_p$ for distinct primes $p$, we show that 
$T\in\Aut(G)$ acts expansively on $\Sub_G$ if and only if $T$ is expansive. However, any higher dimensional $p$-adic vector space $\Q_p^n$, ($n\geq 2$), 
does not admit any automorphism which acts expansively on $\Sub_G$. 

\medskip

\noindent{\bf Keywords:} Expansive automorphisms; Space of closed subgroups; Chabauty topology; Contraction subgroups of automorphisms; Connected Lie groups; totally disconnected groups; $p$-adic  vector spaces.
\medskip

\noindent{\bf Mathematical Subject Classification (2010):} Primary 37B05; 37F15 ; Secondary 22.20; 22.50; 22E35; 54H20
\end{abstract}

\section{Introduction} 
\label{intro}

Let $(X,d)$ be a metric space and let $\Homeo(X)$ denote the space of homeomorphisms of $X$. 
Let $\Gamma$ be a group which acts on $X$ by homeomorphisms; i.e.\ there is a group homomorphism from $\Gamma$ to $\Homeo(X)$. 
The $\Gamma$-action on $X$ is said to be {\em expansive} if there exists $\delta>0$ which 
satisfies the following: If $x,y\in X$ with $x\neq y$, then $d(\gamma{(x)},\gamma(y))>\delta$ for some $\gamma\in\Gamma$. A homeomorphism 
$\varphi$ of $X$ is said to be {\em expansive} if the group $\{\varphi^n\}_{n\in\mathbb{Z}}$ acts expansively on $X$  
(equivalently, we say that the $\varphi$-action on $X$ is expansive). 

Let $G$ be a locally compact Hausdorff group with the identity $e$. An automorphism $T$ of $G$ is said to be {\em expansive} if $\cap_{n\in\Z}T^n(U)=\{e\}$ for 
some neighbourhood $U$ of $e$. 
If $T$ is expansive on $G$, then $G$ is metrizable and the above definition is equivalent to the one given in terms of the left invariant metric $d$ on $G$. 

 The notion of expansivity was introduced by Utz in \cite{Ut50} and studied by many in different contexts (see Bryant \cite{Br60}, Schmidt \cite{Sc95}, 
 Gl\"ockner and Raja \cite{GR17}, Shah \cite{Sh19} and references cited therein). 
 
We now recall the definition of $K$-contraction groups of $T$. Let $T\in\Aut(G)$ and let $K$ be a compact $T$-invariant subgroup of $G$. 
The group $C_K(T)=\{x\in G\mid T^n(x)K\to K \text{ in } G/K \mbox{ as } n\to\infty\}$ is called the $K$-contraction group of $T$. 
It is $T$-invariant and the group $C(T):=C_{\{e\}}(T)$ is called the {\em contraction group} of $T$. If $T$ is expansive and if $G$ is not discrete, 
by Theorem 2.9 of \cite{Sh19}, it follows that $T$ can not be distal and either $C(T)$ or $C(T^{-1})$ is nontrivial. 

We know that an automorphism of the $n$-dimensional torus $\TT^n$ (which belongs to $\GL(n,\Z)$) 
is expansive if and only if all its eigenvalues are of absolute value other than 1 (see e.g.\ \cite{KS89} or \cite{Sc95}). 
However, the unit circle does not admit any expansive homeomorphism \cite[Theorem 2.2.26]{AH94}. 
Here we are going to show that a large class of homeomorphisms on certain compact spaces are not expansive; namely, the 
homomorphisms of $\Sub_G$ arising from the automorphisms of almost connected locally compact infinite groups $G$.

Let $G$ be a locally compact (Hausdorff) topological group. 
Let $\Sub_{G}$ denote the set of all closed subgroups of $G$ equipped with the Chabauty topology \cite{Ch50}. 
Then $\Sub_G$ is compact and Hausdorff. It is metrizable if $G$ is so (see \cite{Ge18} and Chapter E of \cite{BP92} for more details). 
Let $\Aut(G)$ denote the group of automorphisms of $G$. There is a natural action of $\Aut(G)$ on $\Sub_G$; namely, 
$(T,H)\mapsto T(H)$, $T\in\Aut(G)$, $H\in\Sub_G$. For each $T\in\Aut(G)$, the map $H\mapsto T(H)$ defines a homeomorphism
 of $\Sub_G$ \cite[Proposition 2.1]{HK}, and the corresponding map from $\Aut(G)\to\Homeo(\Sub_G)$ is a group homomorphism. 
Here, we would like to study the action of an automorphism of a locally compact first countable (metrizable) group $G$ on $\Sub_{G}$ in terms of
 expansivity. 

Let $\Sub^a_{G}$ denote the space of all closed abelian subgroups of $G$. 
Note that $\Sub^a_{G}$ is closed in $\Sub_{G}$, and hence compact, and it is invariant under the action of $\Aut(G)$. 
Moreover, if $T\in\Aut(G)$ acts expansively on $\Sub_{G}$, then it acts expansively on $\Sub^a_{G}$. 
We show that a nontrivial connected Lie group $G$ does not have any automorphism which acts expansively on $\Sub^a_G$ (see \Cref{Lie-Group}). 
For a compact group $G$ and $T\in\Aut(G)$, we show that $T$ acts expansively on $\Sub_{G}$ if and only if $G$ is finite (see \Cref{Compact-Group}). 
As a consequence of these results, we get that any automorphism of a nontrivial connected locally compact group $G$ does not act expansively on 
$\Sub_G$ (see \Cref{ConnGroup}). 

 For a general locally compact group $G$ and $T\in\Aut(G)$, we show that if $T$ acts expansively on $\Sub_{G}$, then $G$ is totally 
 disconnected and $T$ acts expansively on $G$ (more generally, see \Cref{Totally-disconnected}). Let $\Q_p$ denote the $p$-adic field for a prime $p$.
 It is locally compact and totally disconnected and, it is a topological group with addition as the group operation. For 
 $G=\Q_{p_1}\times\cdots\times\Q_{p_n}$, a finite direct product, where $p_1, \ldots, p_n$ are distinct primes, we show that $T$  
 acts expansively on $\Sub_G$ if and only if $T$ is expansive (see \Cref{n-primes}). However, we show that 
any higher dimensional $p$-adic vector space $\Q_p^n$, ($n\geq 2$), does not admit any automorphism 
which acts expansively on $\Sub_{\Q_p^n}$ (see \Cref{p-adic}). Using these results and a structure theorem for totally disconnected locally compact contraction groups 
obtained by Gl\"ockner and Willis in \cite{GW10}, 
we prove that if an automorphism of $G$ acts expansively on $\Sub_G$, then its contraction group is either trivial or a finite direct product of 
 $\Q_p$ for distinct primes $p$ (see \Cref{main} and \Cref{rem3}). 

For many groups $G$, the space $\Sub_G$ and $\Sub^a_G$ have been identified 
(see e.g.\  Pourezza and Hubbard \cite{PH79}, Bridson, de la Harpe and Kleptsyn \cite{BHK09} and also Baik and Clavier \cite{BC16}). 
Since the homeomorphisms of $\Sub_G$ arising from the action of $\Aut(G)$ form a large 
subclass of it, it is significant to study the expansivity of such homeomorphisms of $\Sub_G$. 

A homeomorphism $\varphi$ of a topological space $X$ is said to be {\em distal} if for every pair of distinct elements $x,y\in X$, the closure of the 
double orbit $\{(\varphi^n(x), \varphi^n(y))\mid n\in\Z\}$ does not intersect the diagonal $\{(d,d)\mid d\in X\}$ in $X\times X$. 
In case $X$ is a compact metric space with a metric $d$, $\varphi$ is distal if $\inf_{n\in\Z} d(\varphi^n(x),\varphi^n(y))>0$, $x,y\in X$.
Note that for homeomorphisms of compact infinite metric spaces, distality and expansivity are opposite phenomena \cite[Theorem 2]{Br60}. 
Shah and Yadav in \cite{SY19} have discussed the distality of the actions of automorphisms on $\Sub_G$ and $\Sub^a_G$.  
The study of expansivity of these actions in the current paper contributes to and enhances the understanding of  the dynamics of actions of 
automorphisms of $G$ on $\Sub_G$. 

 Throughout, $G$ is a locally compact first countable (metrizable) topological group. For a closed subgroup $H$, let $H^0$ denote the 
 connected component of the identity $e$ in $H$. For $T\in\Aut(G)$, we say that a subset $A$ of $G$ is $T$-invariant if $T(A)=A$.

\section{Basic Results on Expansive Actions and Sub$_{\boldsymbol{G}}$} 
\label{sec:1}

Given a locally compact (metrizable) group, the Chabauty topology on $\Sub_{G}$ was introduced by Claude Chabauty in \cite{Ch50}. 
A sub-basis of the Chabauty topology on $\Sub_{G}$ is given by the sets of the following form $O_{K}=\{A\in\Sub_{G}\mid A\cap K=\emptyset\}$, where 
$K$ is a compact subset of $G$, and $O_{U}=\{A\in\mathrm{ Sub}_{G}\mid  A\cap U\neq\emptyset\}$, where $U$ is an open subset of $G$. 

Any closed subgroup of $\R$ is either a discrete group generated by a real number or the whole group $\R$, and 
$\Sub_{\R}$ is homeomorphic to $[0,\infty]$ with a compact topology. 
Any closed subgroup of $\Z$ is of the form $n\Z$ for some $n\in\N\cup\{0\}$, 
and $\Sub_{\Z}$ is homeomorphic to $\{\frac{1}{n}\mid n\in\N\}\cup\{0\}$.
The space $\Sub_{\R^2}$ is homeomorphic to $\mathbb{S}^4$ (see \cite{PH79}).
Note that the space $\Sub_{\R^n}$ is simply connected for all $n\in\N$ \cite[Theorem 1.3]{Kl09}. 

We first recall some known results which will be useful. 

\begin{lemma} $\cite[\mbox{ Corollary }5.22\ \&\ \mbox{Theorem }5.26]{Wa82}$  \label{e-basic}
Let $(X,d)$ be a compact metric space. Then the following hold for homeomorphisms of $X$: 
\begin{enumerate}
\item[$(1)$] Expansivity is a topological conjugacy invariant. 
\item[$(2)$] Expansivity of a homeomorphism is independent of the metric chosen as long as the metric induces the topology of X.
\end{enumerate}
Moreover, the following hold for any homeomorphism $\varphi$ of $X$: 
\begin{enumerate}
\item[$(3)$] For every $n\in\Z\setminus\{0\}$, if $\varphi$ is expansive, then $\varphi^n$ has only finitely many fixed points. 
\item[$(4)$] $\varphi^n$ is expansive for some $n\in\Z\setminus\{0\}$ if and only if $\varphi^n$ is expansive for all $n\in\Z\setminus\{0\}$.
\item[$(5)$] If $\varphi$ is expansive and $Y$ is a closed $\varphi$-invariant subset of $X$, then $\varphi|_{Y}$ is also expansive.
\end{enumerate}
\end{lemma}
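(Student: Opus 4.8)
The plan is to derive all five items directly from the definition of expansivity, exploiting the fact that on a compact metric space every continuous map is uniformly continuous; this is the single feature that allows one to transfer expansive constants between conjugate spaces, between equivalent metrics, and between powers of a map. Throughout, I call $\delta>0$ an \emph{expansive constant} for a homeomorphism $\psi$ of $(X,d)$ if, whenever $x\neq y$, there is $n\in\Z$ with $d(\psi^n(x),\psi^n(y))>\delta$.

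For (1), suppose $h\colon X\to Y$ is a topological conjugacy between $\varphi$ on $X$ and $\psi$ on $Y$, so $h\circ\varphi=\psi\circ h$, and let $\delta$ be an expansive constant for $\varphi$. Since $X$ is compact, $h^{-1}$ is uniformly continuous, so there is $\epsilon>0$ with $d_Y(a,b)\le\epsilon\Rightarrow d_X(h^{-1}(a),h^{-1}(b))\le\delta$. Given distinct $y_1,y_2\in Y$, I apply expansivity to the distinct points $h^{-1}(y_1),h^{-1}(y_2)$ and use $\varphi^n\circ h^{-1}=h^{-1}\circ\psi^n$ to conclude $d_Y(\psi^n(y_1),\psi^n(y_2))>\epsilon$; hence $\epsilon$ is an expansive constant for $\psi$. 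Item (2) is the special case $X=Y$ and $h=\mathrm{id}$: two metrics inducing the same compact topology are uniformly equivalent, so the same argument transports an expansive constant from one metric to the other. Item (5) is even more immediate: if $Y\subseteq X$ is closed and $\varphi$-invariant, then $\varphi|_Y$ inherits the expansive constant $\delta$ of $\varphi$ verbatim, since distinct points of $Y$ are distinct in $X$ and their $\varphi$-orbits remain in $Y$.

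For (4), I would first record that $\varphi$ and $\varphi^{-1}$ share expansive constants, because $\{\varphi^n\}_{n\in\Z}=\{(\varphi^{-1})^n\}_{n\in\Z}$, which reduces the statement to positive powers. If $\varphi^n$ is expansive with constant $\delta$, then $\varphi$ is expansive with the same $\delta$, since a separation $d(\varphi^{nk}(x),\varphi^{nk}(y))>\delta$ is in particular a separation along the full $\varphi$-orbit. Conversely, if $\varphi$ is expansive with constant $\delta$, then by uniform continuity of the finitely many maps $\varphi^0,\ldots,\varphi^{n-1}$ there is $\delta'>0$ with $d(a,b)\le\delta'\Rightarrow d(\varphi^r(a),\varphi^r(b))\le\delta$ for all $0\le r<n$. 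Writing a separating index $j$ as $j=nq+r$ with $0\le r<n$ and applying this bound yields $d((\varphi^n)^q(x),(\varphi^n)^q(y))>\delta'$, so $\delta'$ is an expansive constant for $\varphi^n$.

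Item (3) is the step I expect to carry the real content. Fix $n\neq 0$ and let $F=\{x\in X\mid\varphi^n(x)=x\}$, a closed and hence compact subset. Because every $x\in F$ is periodic with period dividing $n$, its full $\varphi$-orbit equals $\{\varphi^k(x)\mid 0\le k<n\}$, so the separation guaranteed by expansivity already occurs among these finitely many iterates; thus the continuous function $D(x,y)=\max_{0\le k<n} d(\varphi^k(x),\varphi^k(y))$ satisfies $D(x,y)>\delta$ for all distinct $x,y\in F$. If $F$ were infinite, compactness would furnish a point of $F$ together with a sequence of distinct points of $F$ converging to it, along which $D$ would tend to $0$ by continuity, contradicting $D>\delta$; hence $F$ is finite. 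The one subtlety to handle with care is the reduction of the separating index to the range $0\le k<n$, which is precisely where periodicity of the points of $F$ is used: it converts the a priori infinite search over $n\in\Z$ in the definition into a uniform, continuous comparison, and this is what makes the compactness contradiction go through.
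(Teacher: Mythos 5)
Your proof is correct; the paper itself gives no argument for this lemma, simply citing Walters (Corollary 5.22 and Theorem 5.26 of \cite{Wa82}), and your derivations are the standard ones found there, all resting on uniform continuity from compactness. The only cosmetic point is that in item $(3)$ you should note the harmless reduction to $n>0$ (via $\mathrm{Fix}(\varphi^n)=\mathrm{Fix}(\varphi^{-n})$) before writing the orbit as $\{\varphi^k(x)\mid 0\le k<n\}$, just as you did explicitly in item $(4)$.
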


\begin{remark} \label{rem1} Note that $(4)$ and $(5)$ of \Cref{e-basic} hold for any automorphism of a  locally compact $($not necessarily 
compact$)$ group and $(5)$ of $\Cref{e-basic}$ also holds for any $($not necessarily compact$)$ metric space. 
\end{remark}

Recall that there is a natural group action of $\Aut(G)$, the group of automorphisms of $G$, on 
$\Sub_G$ defined as follows:
$$\Aut(G)\times\Sub_G\to\Sub_G;\ (T,H)\mapsto T(H), T\in\Aut(G), H\in\Sub_G.$$
The map $H\mapsto T(H)$ is a homeomrphism of $\Sub_G$ for each $T\in\Aut(G)$, and the corresponding map from $\Aut(G)$ to $\Homeo(\Sub_G)$ is a homomorphism. 

The following elementary lemma will be useful and it is easy to prove. However we give a proof for the sake of completion. 

\begin{lemma}\label{subg-infinite}
If $G$ is an infinite locally compact group, then $\Sub^a_{G}$ is also infinite. 
In particular, for  an infinite closed subset $\HH$ of $\Sub_{G}$, the action of the trivial map $\Id$ on $\HH$ is not expansive.
\end{lemma}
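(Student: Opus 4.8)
The plan is to prove the two assertions separately, the second being a quick consequence of compactness. For the first, I would produce infinitely many closed abelian subgroups via monothetic closures. For each $x\in G$ consider the closed subgroup $C_x=\overline{\langle x\rangle}$; being the closure of a cyclic group it is abelian, so the family $\{C_x\mid x\in G\}$ lies in $\Sub^a_G$. If this family is infinite we are already done. Otherwise $G=\bigcup_{x}C_x$ is a \emph{finite} union of closed abelian subgroups, and since $G$ is infinite at least one of them, say $A=\overline{\langle x_0\rangle}$, must be infinite (a finite union of finite sets is finite). Thus in the remaining case $G$ contains an infinite \emph{monothetic} closed abelian subgroup $A$, and since every closed subgroup of $A$ is a closed abelian subgroup of $G$ we have $\Sub_A\subseteq\Sub^a_G$; it therefore suffices to show that $\Sub_A$ is infinite.

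The heart of the argument is then the claim that an infinite monothetic locally compact abelian group $A$ has infinitely many closed subgroups. Here I would invoke the classical dichotomy (Weil) that a monothetic locally compact group is either isomorphic to $\Z$ or compact. If $A\cong\Z$, the subgroups $n\Z$ with $n\in\N$ give infinitely many distinct closed subgroups. If $A$ is compact and infinite, I would pass to the Pontryagin dual: $\widehat A$ is an infinite discrete abelian group, the annihilator map gives a bijection between closed subgroups of $A$ and subgroups of $\widehat A$, and an infinite abelian group has infinitely many subgroups; indeed a group with only finitely many subgroups must be finite, since each of its cyclic subgroups would then be finite (an infinite cyclic group already has infinitely many subgroups) and the group would be a finite union of finite sets. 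This compact case, relying on duality (or alternatively on the structure theorem $A\cong\R^n\times L$ with $L$ having a compact open subgroup), is the step I expect to require the most care, though it is entirely standard.

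For the second assertion, note that the group generated by $\Id$ is trivial, so the $\Id$-action on $\HH$ is expansive precisely when there is $\delta>0$ with $d(H_1,H_2)>\delta$ for all distinct $H_1,H_2\in\HH$; that is, precisely when $\HH$ is uniformly $\delta$-separated. Since $G$ is locally compact and metrizable, $\Sub_G$ is compact and metrizable, so the closed subset $\HH$ is itself an infinite compact metric space. An infinite compact metric space has an accumulation point, hence contains two distinct points within distance $\delta$ for every $\delta>0$, and so cannot be uniformly separated; therefore $\Id$ is not expansive on $\HH$. Applying this with $\HH=\Sub^a_G$, which is infinite by the first part and closed in $\Sub_G$, recovers the ``in particular'' statement.
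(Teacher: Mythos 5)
Your proposal is correct and follows essentially the same route as the paper: both reduce to an infinite monothetic closed subgroup, apply Weil's dichotomy (discrete $\cong\Z$ versus compact closure), and handle the compact case via Pontryagin duality and the fact that an infinite discrete abelian group has infinitely many subgroups. The only cosmetic differences are how you locate the infinite monothetic subgroup (finiteness of the family $\{\overline{\langle x\rangle}\}$ rather than finiteness of the set of torsion elements) and that you prove the second assertion directly from the existence of an accumulation point in an infinite compact metric space, where the paper instead cites the fact that an expansive homeomorphism has only finitely many fixed points.
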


\begin{proof}
If $G$ has infinitely many elements of finite order, then $\Sub^a_{G}$ is infinite. 
Suppose $G$ has finitely many elements of finite order. Then $G$ has an 
element say, $x$ which does not have finite order.  
Then the cyclic subgroup $G_x$ generated by $x$ is either discrete and isomorphic to $\Z$ or its closure $\overline{G}_x$ is a 
compact group \cite[Theorem 19]{Mo77}. 
In the first case, there are infinitely many closed subgroups; namely, $G_{x^n}$ generated by $x^n$, $n\in\N$. 
In the second case $\overline{G}_x$ is a compact abelian infinite group. 
Then its Pontryagin dual is an infinite discrete abelian group, so has infinitely many subgroups. 
By Pontryagin duality, it follows that $\overline{G}_x$ has infinitely many closed subgroups. 
Therefore $\Sub_{\overline{G}_x}$, and hence $\Sub^a_{G}$ is infinite. 

As the $T$-action on $\HH$ has infinitely many fixed points, by \Cref{e-basic}\,$(3)$, the $\mathrm{Id}$-action on $\HH$ is not expansive.
\end{proof}

The following lemma shows that the expansivity carries over to the quotients modulo closed invariant subgroups. 

\begin{lemma}\label{subg-q}
Let $G$ be a locally compact group, $T\in\Aut(G)$ and let $H$ be a  $T$-invariant closed normal subgroup of $G$. 
Let $\ol{T}\in\Aut(G/H)$ be the corresponding map defined as $\overline{T}(gH)=T(g)H$, for all $g\in G$. 
If the $T$-action on $\Sub_{G}$ is expansive, then the $T$-action on $\Sub_H$ as well as the $\ol{T}$-action on $\Sub_{G/H}$ are also expansive.
\end{lemma}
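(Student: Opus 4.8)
The plan is to realize both $\Sub_H$ and $\Sub_{G/H}$ as closed, $T$-invariant subspaces of the compact space $\Sub_G$ carrying actions conjugate to the given $T$- and $\ol T$-actions, and then to invoke the permanence properties recorded in \Cref{e-basic}. Concretely, I would introduce the two subsets
\[
\mathcal{S}_{\subseteq} = \{A\in\Sub_G \mid A\subseteq H\}, \qquad \mathcal{S}_{\supseteq} = \{A\in\Sub_G \mid H\subseteq A\}
\]
of $\Sub_G$ and argue that the first is equivariantly homeomorphic to $\Sub_H$ and the second to $\Sub_{G/H}$.

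For $\Sub_H$, since $H$ is closed in $G$, every closed subgroup of $H$ is a closed subgroup of $G$, so the natural inclusion $\Sub_H\to\Sub_G$ has image exactly $\mathcal{S}_{\subseteq}$. Because $G\setminus H$ is open, $\mathcal{S}_{\subseteq}$ is the complement of the sub-basic open set $O_{G\setminus H}=\{A\mid A\cap(G\setminus H)\neq\emptyset\}$ and is therefore closed, hence compact. A short comparison of sub-bases, using that compact subsets of $H$ are compact in $G$ and that $A\cap(U\cap H)=A\cap U$ for $A\subseteq H$ and $U$ open in $G$, shows the Chabauty topology of $\Sub_H$ agrees with the subspace topology from $\Sub_G$, so the inclusion is a homeomorphism onto $\mathcal{S}_{\subseteq}$. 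As $T(H)=H$, the set $\mathcal{S}_{\subseteq}$ is $T$-invariant and the inclusion intertwines the $T$-action on $\Sub_H$ with $T|_{\mathcal{S}_{\subseteq}}$; then \Cref{e-basic}$(5)$ yields that the $T$-action on $\Sub_H$ is expansive.

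For $\Sub_{G/H}$, let $\pi:G\to G/H$ be the quotient homomorphism. I would use the classical correspondence sending $\ol A\in\Sub_{G/H}$ to $\pi^{-1}(\ol A)$, a closed subgroup of $G$ containing $H$, with inverse $A\mapsto \pi(A)=A/H$ for $A\in\mathcal{S}_{\supseteq}$. This is a bijection onto $\mathcal{S}_{\supseteq}$, which is closed in $\Sub_G$ because its complement is $\bigcup_{h\in H}O_{\{h\}}$, a union of sub-basic open sets, and which is $T$-invariant since $T(H)=H$. The correspondence is equivariant because $\pi\circ T=\ol T\circ\pi$. Granting that it is a homeomorphism, $\mathcal{S}_{\supseteq}$ is a closed $T$-invariant subset of the compact space $\Sub_G$, so \Cref{e-basic}$(5)$ makes $T|_{\mathcal{S}_{\supseteq}}$ expansive, and the conjugacy invariance \Cref{e-basic}$(1)$ transfers this to the $\ol T$-action on $\Sub_{G/H}$.

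The main obstacle is exactly the claim just granted: that $\ol A\mapsto\pi^{-1}(\ol A)$ is a homeomorphism of $\Sub_{G/H}$ onto $\mathcal{S}_{\supseteq}$. The closedness and $T$-invariance of the two subsets are immediate from the sub-basis, and equivariance is a formal computation, so the topological identifications carry the weight. I expect to establish them via the net (sequential) characterization of Chabauty convergence — $A_n\to A$ iff every limit of points $a_n\in A_n$ lies in $A$ and every point of $A$ is such a limit — together with the fact that $\pi$ is continuous, open, and proper on the relevant sets, its fibres being translates of the closed subgroup $H$. The $\Sub_H$ identification is routine; for $\Sub_{G/H}$ one must carefully lift and push forward convergent nets of subgroup elements across $\pi$, which is the only genuinely technical point.
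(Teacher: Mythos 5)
Your proposal is correct and follows essentially the same route as the paper: identify $\Sub_H$ and the set of closed subgroups containing $H$ as closed $T$-invariant subsets of $\Sub_G$, apply \Cref{e-basic}\,$(5)$, and transfer expansivity to $\Sub_{G/H}$ via the quotient homeomorphism $K\mapsto K/H$ together with \Cref{e-basic}\,$(1)$. The one claim you leave granted --- that this quotient map is a homeomorphism onto $\{K\in\Sub_G\mid H\subset K\}$ --- is exactly what the paper handles by citing Schochetman \cite[Proposition 2]{Sm71}, so no new argument is needed there.
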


\begin{proof}
As $T(H)=H$, $\Sub_H$ is $T$-invariant. 
Also, $\HH=\{K\in\Sub_G: H\subset K\}$ is a $T$-invariant closed subset of $\Sub_G$. 
Therefore, by \Cref{e-basic}\,$(5)$, the $T$-action on both $\Sub_H$ and $\HH$ is expansive.  
Let $\pi:\HH\to\Sub_{G/H}$  be defined as $\pi({K})=K/H$. Then $\pi$ is a homeomorphism \cite[Proposition 2]{Sm71}. 
By \Cref{e-basic}\,(1), $\pi T\pi^{-1}$ acts expansively on $\Sub_{G/H}$.  As the $\pi T\pi^{-1}$-action on $\Sub_{G/H}$ 
is the same as the $\ol{T}$-action on $\Sub_{G/H}$, the $\ol{T}$-action on $\Sub_{G/H}$ is expansive.
\end{proof}

The converse of the above lemma does not hold as illustrated by \Cref{ex}. 

We say that a Hausdorff topological space $K$ acts continuously on a topological space $X$ if there exists 
a continuous map from $K\times X$ to $X$, $(k,x)\mapsto kx$, $k\in K$, $x\in X$. 
In particular, each $k\in K$ defines a continuous map on $X$.

The following lemma is well-known and easy to prove. 

\begin{lemma} \label{compact-action}
Let $(X, d)$ be a metric space and let $K$ be a compact space which acts continuously on $X$. Then  
this action of $K$ on $X$ is uniformly equicontinuous; $($i.e.\ given $\epsilon>0$, there exists $\delta>0$, for any pair of elements 
$x,y\in X$ with $d(x,y)<\delta$, we have $d(kx,ky)<\epsilon$ for all $k\in K)$. 
\end{lemma}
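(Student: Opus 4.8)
The plan is to prove this by a direct compactness (extreme-value) argument, exploiting the joint continuity of the action map together with compactness of $K$ (and of $X$, which holds in all the intended applications, where $X=\Sub_G$ or a closed subset of it). Write $\Phi\colon K\times X\to X$, $\Phi(k,x)=kx$, for the continuous action map supplied by the hypothesis.

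First I would fix $\epsilon>0$ and transfer the problem to the triple product. Define $\Theta\colon K\times X\times X\to[0,\infty)$ by $\Theta(k,x,y)=d(kx,ky)=d(\Phi(k,x),\Phi(k,y))$; this is continuous, being the composite of $d$ with the jointly continuous maps built from $\Phi$ and the projections. Hence the set $C=\Theta^{-1}\big([\epsilon,\infty)\big)$ is closed, and since $K\times X\times X$ is compact, $C$ is compact. The key observation is that $C$ is disjoint from the diagonal $\{(k,x,x):k\in K,\ x\in X\}$, because there $\Theta(k,x,x)=d(kx,kx)=0<\epsilon$. Consequently the continuous function $(k,x,y)\mapsto d(x,y)$ is \emph{strictly positive} on $C$ (if $x=y$ then $kx=ky$, so $(k,x,y)\notin C$), and being continuous on the compact set $C$ it attains a minimum $\delta>0$ there (if $C=\emptyset$, any $\delta>0$ works vacuously). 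Then for all $x,y\in X$ with $d(x,y)<\delta$ and all $k\in K$ we have $(k,x,y)\notin C$, i.e.\ $d(kx,ky)=\Theta(k,x,y)<\epsilon$, which is exactly the claimed uniform equicontinuity.

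I expect the only genuine subtlety to be the role of compactness: the single $\delta$ must work uniformly over \emph{all} $k\in K$ and \emph{all} pairs $x,y$ simultaneously, and this global uniformity is precisely what compactness of $K\times X\times X$ provides, since it is what forces the infimum of $d(x,y)$ over $C$ to be attained and hence positive. Compactness of $K$ is available by hypothesis, while compactness of $X$ is where the argument relies on the ambient space being compact, as it is in every application in this paper. An equally short alternative is a proof by contradiction: if equicontinuity fails for some $\epsilon_0>0$, one obtains nets $(k_\alpha,x_\alpha,y_\alpha)$ with $d(x_\alpha,y_\alpha)\to 0$ but $d(k_\alpha x_\alpha,k_\alpha y_\alpha)\ge\epsilon_0$; passing to a convergent subnet (nets rather than sequences, since $K$ need not be metrizable) yields $k_\alpha\to k$, $x_\alpha\to x$ and $y_\alpha\to x$, and joint continuity of $\Phi$ then gives $d(k_\alpha x_\alpha,k_\alpha y_\alpha)\to 0$, a contradiction.
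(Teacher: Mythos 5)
Your argument is correct, and since the paper offers no proof of this lemma (it is dismissed as ``well-known and easy to prove''), there is nothing to compare it against; your compactness/extreme-value argument and the alternative subnet argument are both standard and valid ways to fill this in. The one point worth emphasizing is the issue you yourself flag: your proof uses compactness of $X$ (to make $K\times X\times X$ compact, or to extract a convergent subnet of $(x_\alpha)$), which is \emph{not} among the stated hypotheses of the lemma. This is not a defect of your proof but of the statement: as literally phrased the lemma is false for non-compact $X$ --- take $K$ a one-point space acting on $\R$ by $x\mapsto x^2$, a continuous action of a compact space that is not uniformly equicontinuous. So some additional hypothesis is unavoidable, and compactness of $X$ is the natural one; it holds in every application in the paper, where $X$ is $\Sub_G$ or a closed (hence compact) subset of it. You have therefore proved the version of the lemma that the authors actually use, and identified precisely where their statement is imprecise.
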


Let $H$ be (Hausdorff) topological group. We say that $H$ 
acts continuously on a topological space X by homeomorphisms if there exists a homomorphism 
$\rho : H\to \Homeo(X)$ such that the corresponding map $H \times X \to X$ given by $(h, x)\mapsto \rho(h)(x)$ is continuous. 
The following lemma will be useful and it can be easily proven by using \Cref{compact-action}. 
The lemma will apply in particular to the action of $H=\Aut(G)$ on $X=\Sub_G$, when $G$ is a connected Lie group or a $p$-adic vector space. 

\begin{lemma} \label{GroupAction}  Let $X$ be a metric space and let $H$ be a topological group acting continuously on $X$ by homeomorphisms. 
Let $\varphi, \psi\in H$ be such that $\varphi\psi=\psi\varphi$ and $\psi$ is contained in a compact subgroup of $H$. 
Then $\varphi$ is expansive if and only if $\varphi\psi$ is expansive. 
\end{lemma}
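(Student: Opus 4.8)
The plan is to realise the commuting compact factor $\psi$ as a metric isometry, so that passing from $\varphi$ to $\varphi\psi$ leaves all orbit distances unchanged. The one genuine subtlety is that $X$ need not be compact, so I cannot simply invoke the metric-independence of expansivity (\Cref{e-basic}\,$(2)$), which is stated for compact spaces. Instead I would first manufacture a metric adapted to the compact subgroup $K$ containing $\psi$ and verify that it is uniformly equivalent to $d$.

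Let $K$ be a compact subgroup of $H$ with $\psi\in K$. Restricting the action gives a continuous action of $K$ on $X$, so by \Cref{compact-action} it is uniformly equicontinuous. Define
$$ D(x,y)=\sup_{k\in K} d(kx,ky),\qquad x,y\in X. $$
First I would verify that $D$ is a genuine metric: for fixed $x,y$ the map $k\mapsto d(kx,ky)$ is continuous on the compact set $K$, hence bounded, so $D(x,y)<\infty$; symmetry and the triangle inequality pass to the supremum, and $D(x,y)\ge d(x,y)$ (take $k=e$) gives positivity. Next, $D$ is $K$-invariant: for $k_0\in K$ the substitution $k\mapsto kk_0$ permutes $K$, whence $D(k_0x,k_0y)=D(x,y)$; in particular every element of $K$, and so every power of $\psi$, is a $D$-isometry. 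Finally I would check that $D$ and $d$ are uniformly equivalent: the inequality $D\ge d$ gives one direction, while uniform equicontinuity (applied with $\epsilon/2$) yields, for each $\epsilon>0$, a $\delta>0$ with $d(x,y)<\delta\Rightarrow D(x,y)<\epsilon$. Since expansivity is defined by a single uniform threshold, a short check from the definition shows that $\varphi$ (respectively $\varphi\psi$) is $d$-expansive if and only if it is $D$-expansive.

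The crux is then a one-line computation in the metric $D$. Because $\varphi$ and $\psi$ commute, $(\varphi\psi)^n=\varphi^n\psi^n$ for every $n\in\Z$. Applying the $D$-isometry $\psi^{-n}$ to both arguments and using $\psi^{-n}\varphi^n\psi^n=\varphi^n$ (again by commutativity) gives
$$ D\big((\varphi\psi)^n x,(\varphi\psi)^n y\big)=D\big(\varphi^n\psi^n x,\varphi^n\psi^n y\big)=D\big(\varphi^n x,\varphi^n y\big) $$
for all $n\in\Z$ and all $x,y\in X$. Hence $\varphi$ and $\varphi\psi$ have exactly the same $D$-orbit-distance function, so one acts expansively with respect to $D$ precisely when the other does; combined with the uniform equivalence of $D$ and $d$, this yields the desired equivalence in both directions simultaneously.

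The step I expect to require the most care is the construction and justification of $D$ in the possibly non-compact setting — in particular confirming finiteness of $D$ and the uniform equivalence with $d$ — since this is exactly where \Cref{compact-action} enters and where the hypothesis that $\psi$ lies in a compact subgroup is used; the final isometry computation is then immediate. For the intended applications, where $X=\Sub_G$ is already compact, one could bypass $D$ entirely and argue directly, but the invariant metric $D$ makes the commuting-isometry mechanism transparent in full generality.
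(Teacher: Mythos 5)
Your proof is correct, and it follows the route the paper itself indicates: the paper offers no written proof of \Cref{GroupAction}, saying only that it ``can be easily proven by using \Cref{compact-action}'', and your argument is precisely a careful implementation of that suggestion --- uniform equicontinuity of the compact group $K\ni\psi$ lets you peel off the commuting factor $\psi^{n}$ from $(\varphi\psi)^n=\varphi^n\psi^n$ at a uniform cost. The adapted metric $D(x,y)=\sup_{k\in K}d(kx,ky)$ is a clean (if optional) packaging of this; one could equally apply equicontinuity directly to the orbit distances, but your version is complete and handles the non-compact case correctly.
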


Note that $\Aut(G)$ with the modified compact-open topology is a topological group \cite[9.17]{St06}, and by 
Lemma 2.4 of \cite{SY19} it acts continuously on $\Sub_G$ by homeomorphisms. 
Hence \Cref{GroupAction} holds for the case when $X=\Sub_G$ and $H=\Aut(G)$ with the modified compact-open topology, 
where $G$ is a locally compact metrizable group. 

\section{Actions of Automorphisms of Lie Groups $\boldsymbol{G}$ on Sub$_{\boldsymbol{G}}$} 
\label{sec:2}

In this section we prove that a connected Lie group $G$ does not admit any automorphism which acts expansively on $\Sub^a_G$. 
We also prove that a compact group $G$ admits an automorphism which acts expansively on $\Sub_G$ if and only if $G$ is finite.  
As a consequence of these results, we show that any connected locally compact group $G$ does not admit an automorphism which acts 
expansively on $\Sub_G$. 

\begin{theorem}\label{Lie-Group}
Let $G$ be a nontrivial connected Lie group and let $T\in\Aut(G)$. Then the $T$-action on $\Sub^a_G$ is not expansive.
\end{theorem}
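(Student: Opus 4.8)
The plan is to argue by contradiction: assuming $T$ acts expansively on $\Sub^a_G$, I will produce, inside a closed $T$-invariant subset of $\Sub^a_G$, either a homeomorphism that cannot be expansive or infinitely many fixed points, contradicting \Cref{e-basic}. The first move is to simplify the eigenvalue structure of $T$. Writing $dT\in\Aut(\mathfrak{g})$ for the induced Lie algebra automorphism (the map $T\mapsto dT$ is an injective continuous homomorphism $\Aut(G)\hookrightarrow\Aut(\mathfrak{g})$ since $G$ is connected), I consider the multiplicative Jordan decomposition of $dT$ in the real linear algebraic group $\Aut(\mathfrak{g})$ and split its semisimple part into a hyperbolic factor (positive real eigenvalues) and an elliptic factor $s$ (eigenvalues of modulus $1$). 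The element $s$ commutes with $dT$, lies in a compact subgroup of $\Aut(\mathfrak{g})$, and preserves the relevant discrete data, so it corresponds to $T_e\in\Aut(G)$ lying in a compact subgroup of $\Aut(G)$ and commuting with $T$. By \Cref{GroupAction}, $T$ acts expansively on $\Sub^a_G$ if and only if $T':=TT_e^{-1}$ does; hence I may replace $T$ by $T'$ and assume from now on that every eigenvalue of $dT$ is real and positive.

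Second, I extract a convenient invariant subgroup. Since $G$ is nontrivial and connected, $\mathfrak{g}\neq 0$, and by the previous reduction $dT$ has a real eigenvector $X\neq 0$, say $dT(X)=\lambda X$ with $\lambda>0$. Then $C:=\overline{\exp(\R X)}$ is a closed connected abelian subgroup, and since $T\circ\exp=\exp\circ\,dT$ gives $T(\exp(tX))=\exp(\lambda t X)$, the subgroup $C$ is $T$-invariant. Consequently $\Sub_C=\{H\in\Sub_G:H\subseteq C\}$ is a closed $T$-invariant subset of $\Sub^a_G$, so by \Cref{e-basic}\,$(5)$ (and \Cref{rem1}) the restriction $T|_{\Sub_C}$ must also be expansive. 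Being a closed connected abelian Lie group, $C\cong\R^a\times\TT^b$; I now split according to whether $b=0$.

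If $C$ has a nontrivial compact part ($b\geq 1$), let $K\cong\TT^b$ be the unique maximal compact subgroup of $C$. Then $K$ is characteristic in $C$, hence $T$-invariant, and its torsion subgroups $K[m]\cong(\Z/m)^b$ are characteristic in $K$, hence $T$-invariant, and pairwise distinct as $m$ varies. This exhibits infinitely many fixed points of $T$ in $\Sub^a_G$, contradicting \Cref{e-basic}\,$(3)$. If instead $b=0$, then $C=\overline{\exp(\R X)}$ is the closure of a single one-parameter subgroup inside $\R^a$; as a one-parameter subgroup of a vector group is a closed line, this forces $a=1$ and $C\cong\R$, with $T|_C$ acting as multiplication by $\lambda$. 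Identifying $\Sub_C\cong\Sub_{\R}\cong[0,\infty]$, the map $T|_{\Sub_C}$ sends the discrete subgroup $t\Z$ to $\lambda t\Z$; in the two-point compactification coordinate this is conjugate to a nontrivial translation of $[-\infty,\infty]$ when $\lambda\neq 1$ and to the identity when $\lambda=1$. Neither is expansive (a nontrivial translation of the two-point compactification of $\R$ is not expansive, just as the interval admits no expansive homeomorphism in the spirit of \cite[Theorem 2.2.26]{AH94}; and the identity is ruled out by \Cref{subg-infinite}). In every case $T|_{\Sub_C}$ fails to be expansive, the desired contradiction.

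I expect two points to require the most care. The first, and harder, is the reduction in the opening paragraph: one must justify that the elliptic part of the Jordan decomposition of $dT$ genuinely lifts to an automorphism $T_e$ of $G$ that sits in a compact subgroup of $\Aut(G)$ for the modified compact-open topology and commutes with $T$, so that \Cref{GroupAction} applies on $\Sub^a_G$; this rests on $\Aut(G)$ being closed in $\Aut(\mathfrak{g})$ with matching topologies and on the elliptic part preserving the kernel of the universal cover $\to G$. The second is the verification that the induced map on $\Sub_C\cong[0,\infty]$ is not expansive; I would either invoke the nonexistence of expansive homeomorphisms of a compact arc (a companion to the circle statement cited from \cite{AH94}) or give the short direct estimate showing that two points close to a common endpoint of $[0,\infty]$ never separate under the iterates of the translation. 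The remaining verifications, namely continuity of $T\circ\exp=\exp\circ\,dT$, the $T$-invariance of $C$, and the closedness of $\Sub_C$ in $\Sub^a_G$, are routine.
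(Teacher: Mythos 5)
Your overall skeleton --- find a $dT$-eigenvector with positive eigenvalue, pass to the closed abelian $T$-invariant subgroup $C=\overline{\exp(\R X)}$, dispose of the torus case via torsion subgroups and of the $C\cong\R$ case via non-expansivity on $\Sub_\R\cong[0,\infty]$ --- is sound and close in spirit to the paper's argument, which also reduces to $\Sub_\R$ and to torsion subgroups of tori. The genuine gap is your opening reduction. You assert that the elliptic factor $s$ of the Jordan decomposition of $dT$ ``corresponds to $T_e\in\Aut(G)$''; this is false for a general connected Lie group. The embedding $\Aut(G)\hookrightarrow\Aut(\mathfrak{g})$ identifies $\Aut(G)$ with the subgroup of $\Aut(\tilde{G})\cong\Aut(\mathfrak{g})$ preserving $\Gamma=\ker(\tilde{G}\to G)$; this subgroup is closed but not Zariski closed, and the Jordan components of $dT$ need not preserve $\Gamma$. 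Concretely, take $G=\TT^4$ and $T\in\GL(4,\Z)=\Aut(\TT^4)$ with a pair of non-real eigenvalues of modulus $\ne 1$ (such matrices exist, e.g.\ companion matrices of suitable irreducible quartics with constant term $\pm 1$). The elliptic part of $dT$ is then an irrational rotation in two planes, and it cannot lie in $\GL(4,\Z)$: by Kronecker's theorem an element of $\GL(4,\Z)$ all of whose eigenvalues have modulus $1$ is quasi-unipotent, hence cannot be an infinite-order semisimple rotation. So $T_e\notin\Aut(G)$, \Cref{GroupAction} cannot be invoked as you propose, and for such $T$ your argument never produces the real eigenvector $X$ on which everything else depends.

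The paper avoids exactly this trap: it applies \Cref{GroupAction} only inside a two-dimensional $T$-invariant subspace $W$ of a vector group, where $\Aut(W)=\GL(2,\R)$ and the elliptic factor really does lie in a compact subgroup of $\Aut(W)$; for general $G$ it argues not through eigenvectors of $dT$ but through characteristic subgroups --- the last nontrivial term $R_k\cong\R^n\times\TT^m$ of the derived series of the radical (reducing to the vector and torus cases), and, when $G$ is semisimple, the fact that $\Inn(G)$ has finite index in $\Aut(G)$, so a power of $T$ is inner and fixes an infinite closed abelian subgroup pointwise. To repair your proof you would need either to confine the Jordan-decomposition step to settings where the elliptic part demonstrably stays in $\Aut(G)$ (e.g.\ simply connected $G$, or a $T$-invariant vector subgroup), or to replace it by a structural reduction of the paper's kind. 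Your remaining steps --- the subgroups $K[m]$ giving infinitely many $T$-fixed points in $\Sub^a_G$, and the non-expansivity of a homeomorphism of $[0,\infty]$ fixing an endpoint --- are correct.
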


\begin{proof}
\noindent\textbf{Case I: } Suppose $G\cong\R$. 
If $T=\pm\,\Id$, then $T$ acts trivially on $\Sub_\R$. 
It follows by \Cref{subg-infinite} that the $T$-action on $\Sub_\R$ is not expansive. 
Now suppose $T=\alpha\,\Id$ for some $\alpha\in\R^*$. 
Replacing $T$ by $T^{-1}$ if necessary, we may assume that $0<\alpha<1$. 
Let $K=[1/2, 3/2]$. Then $K$ is compact and it acts continuously on $\Sub_\R$ through the action given as follows: 
$(k, H)\mapsto kH$, $k\in K$, $H\in\Sub_{\R}$. Let $\db$ be a metric on $\Sub_{\R}$. 

Let $\epsilon>0$ be fixed. 
By \Cref{compact-action}, there exists $\delta>0$ such that if $H_1,H_2\in \Sub_\R$ with $\db(H_1,H_2)<\delta$, 
then $\db(kH_1,kH_2)<\epsilon/2$, $k\in K$. 

We know that $T^n(\Z)=\alpha^n\Z\to\R$ 
and $T^{-n}(\Z)=\alpha^{-n}\Z\to\{0\}$ as $n\to\infty$. 
There exists $n_0\in\N$ such that
$$
{\rm max}\{\db(\alpha^n\Z,\R),~\db(\alpha^{-n}\Z,\{0\})\}<\delta \text{ for all } n\geq n_0.$$
This implies that
$$
{\rm max}\{\db(k\alpha^n\Z,\R),\db(k\alpha^{-n}\Z,\{0\})\}<\epsilon/2 \text{ for all } n\geq n_0 \text{ and } k\in K.$$
Now for each $k\in K$ and $n\in\N$ with $n\geq n_0$, we have 
$$
\db(\alpha^n\Z,\alpha^nk\Z)\leq \db(\alpha^n\Z,\R)+\db(\alpha^nk\Z,\R)<\epsilon/2+\epsilon/2=\epsilon.$$
Similarly, we have $\db(\alpha^{-n}\Z,\alpha^{-n}k\Z)<\epsilon$ for all $n\geq n_0$ and $k\in K$. Since the ${T}^n$-action on 
$\Sub_{\R}$ is continuous for all $n\in\Z$, there exists $\delta'>0$ with $0<\delta'<\delta$ such that for any $k\in\R$ if 
$\db(k\Z,\Z)<\delta'$, then $\db(\alpha^n\Z,\alpha^nk\Z)<\epsilon$ for all $|n|\leq n_0$
It is easy to see that for $k_t\in K$,  if $k_t\to 1$, then $k_t\Z\to\Z$. 
This implies that there exists $k'\in K$, such that $\db(k'\Z,\Z)<\delta'$. 
Therefore, $\db(\alpha^n\Z,\alpha^n k'\Z)<\epsilon$ for all $n\in \Z$. 
Since $\epsilon>0$ is arbitrary and $T=\alpha\,\Id$, the $T$-action on $\Sub_\R$ is not expansive. 
 
Suppose $G\cong\R^n$ for some $n\geq 2$ and $T\in{\rm GL}(n,\R)$. 
Suppose $T$ has a real eigenvalue. 
Then we get a one dimensional $T$-invariant space $V\cong\R$ such that $T|_V=\alpha\,\Id$, for some $\alpha\ne 0$. 
From above, the $T$-action on $\Sub_{V}$, and hence on $\Sub_{\R^n}$ is not expansive. 
Now suppose all the eigenvalues of $T$ are complex. 
There exists a two-dimensional subspace $W$ which is $T$-invariant. 
Then there exist $t>0$ and a rotation map $S\in\GL(2,\R)$ such that $T|_W=tASA^{-1}$ for some $A\in\GL(2,\R)$. 
Observe that $ASA^{-1}$ is contained in a compact subgroup of $\GL(2,\R)$. 
As $W\cong\R^2$, from the above argument, $t\,\Id$-action on $\Sub_{W}$ is not expansive. As $\Aut(W)\cong \GL(2,\R)$, 
its topology coincides with the compact-open topology as well as with the modified compact-open topology, and it acts
continuously on $\Sub_W$ by homeomorphisms; (one can also directly show this by using the criteria for convergence of sequences in $\Sub_G$ 
as in \cite[Proposition E.1.2, pp 161]{BP92}). 
By \Cref{GroupAction}, the $T$-action on $\Sub_{W}$, and hence on $\Sub_{\R^n}$ is also not expansive. 
Note that $\Sub_{\R^n}=\Sub^a_{\R^n}$.

\smallskip
\noindent\textbf{Case II: } Suppose $G$ is not isomorphic to $\R^n$ for any $n\in\N$.
Let $R$ be the radical of $G$, i.e.\ $R$ is the largest connected closed solvable normal subgroup. 
Suppose $R\neq\{e\}$. 
Let $R_0=R$, $R_1=\overline{[R,R]}$, $R_{m+1}=\overline{[R_m,R_m]}$ for all $m\geq 1$. 
These are closed connected characteristic subgroups of $G$. 
As $R$ is solvable, there exists $k\in\N\cup\{0\}$ such that $R_k\ne\{e\}$ and $R_{k+1}=\{e\}$. 
As $R_k$ is a nontrivial $T$-invariant closed connected abelian Lie subgroup, $R_k\cong\R^n\times\TT^m$ 
for some $m,n\in\N\cup\{0\}$ such that $m+n$ is nonzero. 
Suppose $m\ne 0$. Then $\TT^m$ is characteristic in $G$, and hence it is $T$-invariant. 
Let $K$ be the set of all roots of unity in $\TT^m$, i.e.\ $K=\cup_{i\in\N}{K_{i}}$, where $K_i=\{x\in\TT^m\mid x^i=e\}$. 
Note that $K_{i}$ is a nontrivial proper closed $T$-invariant subgroup of $\TT^m$, $i\in\N$, and $K_i\ne K_j$ if $i,j\in\N$ are distinct. 
By \Cref{e-basic}\,$(3)$, we get that the $T$-action on $\Sub_{\TT^m}$, and hence on $\Sub^a_G$ is not expansive.  
If $m=0$, then $R_k\cong\R^n$ and from Case I, the $T$-action on $R_k$, and hence on $\Sub^a_G$ is not expansive. 
 
Suppose $R=\{e\}$. Then $G$ is semisimple. 
Then the group $\Inn(G)$ of all inner automorphisms of $G$ is a subgroup of finite index in $\Aut(G)$ (see section 1 in \cite{Da18}). 
There exist $x\in G$ and $n\in\N$ such that $T^n=\inn(x)$; i.e.\ $T^n(g)=xgx^{-1}$, $g\in G$. 
Suppose $x^m\in Z(G)$ for some $m\in\N$. Then $T^{nm}=\Id$ and by \Cref{subg-infinite} and \Cref{e-basic}\,$(3)$, the 
$T$-action on $\Sub^{a}_{G}$ is not expansive. Suppose $x^k\not\in Z(G)$ for every $k\in\N$. 
Let $H_x$ be the closure of the cyclic subgroup generated by $x$. Then $H_x$ is an infinite closed abelian $T$-invariant subgroup. 
As $T$ acts trivially on $H_x$, by \Cref{subg-infinite}, the $T$-action on $\Sub_{H_x}$, and hence on $\Sub^a_G$ is not expansive. 
\end{proof}

Let $X$ be a topological space and let $f$ be a continuous self map on $X$. 
A point $x\in X$ is said to be {\em periodic} if $f^n(x)=x$ for some $n\in\N$. The set ${\rm Per}(f)$ 
denotes the set of all periodic points of $X$. 

Let $G$ be a compact group and let $T\in\Aut(G)$. 
We say that $(G,T)$ satisfies the {\em descending chain condition} if for every decreasing sequence 
$\{G_n\}$ of closed $T$-invariant subgroups in $G$, there exists $k\in\N$ such that $G_n=G_k$ for all $n\geq k$.
Note that if $T$ acts expansively on $\Sub_G$, then by \Cref{e-basic}\,$(3)$, $(G,T)$ satisfies the descending chain condition. 

\begin{theorem}\label{Compact-Group}
Let $G$ be a compact metrizable group and let $T\in\Aut(G)$. 
Then the $T$-action on $\Sub_{G}$ is expansive if and only if $G$ is finite.
\end{theorem}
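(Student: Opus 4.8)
The plan is to prove both directions, with the forward direction (expansive action $\Rightarrow$ $G$ finite) being the substantive one. The reverse direction is immediate: if $G$ is finite, then $\Sub_G$ is finite, hence discrete, so every homeomorphism—in particular the $T$-action—is trivially expansive (any $\delta$ smaller than the minimum distance between distinct points works). So I would dispense with that in one sentence and concentrate on the forward implication.

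For the forward direction, suppose $T$ acts expansively on $\Sub_G$ with $G$ compact and metrizable, and assume toward a contradiction that $G$ is infinite. The key structural fact I would exploit is the descending chain condition noted just before the statement: since $T$ acts expansively on $\Sub_G$, by \Cref{e-basic}\,$(3)$ every power $T^n$ has only finitely many fixed points in $\Sub_G$, so $(G,T)$ satisfies the descending chain condition on closed $T$-invariant subgroups. The strategy is to show that an infinite compact group cannot satisfy this condition in the presence of expansivity. First I would handle the connected case: if $G^0 \neq \{e\}$, then $G^0$ is a nontrivial connected compact (hence Lie-adjacent) $T$-invariant subgroup, and by \Cref{subg-q} the action of $T$ on $\Sub_{G^0}$ would also be expansive—but a nontrivial connected compact group contains a torus $\TT^m$ or a central circle, and the argument in \Cref{Lie-Group} (the infinitely many root-of-unity subgroups $K_i$) already shows no expansive action exists there. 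This reduces the problem to $G$ totally disconnected.

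In the totally disconnected compact case, $G$ is a profinite group, and here is where the main work lies. Being infinite profinite, $G$ has an infinite descending chain of open (hence finite-index, closed) normal subgroups. The obstacle is that these need not be $T$-invariant, so the descending chain condition does not directly apply. To fix this, for each open normal subgroup $N$ I would pass to the $T$-invariant hull $\bigcap_{n\in\Z} T^n(N)$, or better, work with characteristic finite-index subgroups. Since $G$ is compact and metrizable, it is second countable, so it admits a countable neighborhood base at $e$ consisting of open subgroups; intersecting finitely many $T$-translates keeps things open and of finite index. The aim is to build an infinite strictly decreasing sequence of closed $T$-invariant subgroups, contradicting the descending chain condition. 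Alternatively—and this is cleaner—I would combine this with \Cref{subg-infinite}: that lemma shows $\Sub^a_G$ is infinite whenever $G$ is infinite, and the argument there produces, inside the closure of a cyclic group, either infinitely many subgroups $G_{x^n}$ or (via Pontryagin duality) infinitely many closed subgroups of a compact abelian piece.

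The cleanest route to the contradiction, which I would pursue as the main line, is to locate an infinite compact abelian $T$-invariant subgroup and apply duality. \textbf{The hard part} is ensuring $T$-invariance of the infinite abelian subgroup one extracts. I would argue as follows: if $G$ is infinite profinite, either some $T^n$ fixes infinitely many subgroups directly (contradicting \Cref{e-basic}\,$(3)$), or one can find an element $x$ of infinite order whose orbit closure is a nontrivial $T$-invariant compact abelian group $A$; if $T$ acts on $A$ with an eigenvalue structure forcing finitely many fixed subgroups, I would instead use that the profinite $A$ has a dual $\widehat{A}$ that is an infinite discrete torsion abelian group, on which $\widehat{T}$ acts, and a torsion abelian group with infinitely many $\widehat{T}$-invariant subgroups dualizes to infinitely many $T$-invariant closed subgroups—again violating finiteness of fixed points for some power. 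The delicate bookkeeping is guaranteeing that among the infinitely many subgroups produced, a single power $T^n$ fixes infinitely many; I expect to handle this by a pigeonhole or a Baire/compactness argument on the countable family, exploiting that $\Sub_G$ is compact metric so that an infinite $T$-invariant family must accumulate and yield a fixed point of some $T^n$. This contradicts \Cref{e-basic}\,$(3)$, completing the proof that $G$ must be finite.
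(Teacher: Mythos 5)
Your reverse direction and your opening observation that expansivity of the $T$-action on $\Sub_G$ forces the descending chain condition on closed $T$-invariant subgroups both match the paper. The forward direction, however, has two genuine gaps.

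First, your reduction of the connected case does not work as stated. A nontrivial connected compact group need not be a Lie group, so \Cref{Lie-Group} cannot be applied to $G^0$ directly; nor must such a group contain a torus or a central circle (a solenoid is a connected compact abelian group with no circle subgroups), so the root-of-unity argument from \Cref{Lie-Group} is not available either. The paper avoids this by invoking Proposition 3.5 of \cite{Sc95}: under the descending chain condition there is a compact normal $T$-invariant subgroup $H$ with $G/H$ a Lie group and $T|_H$ ergodic, and the case $H=\{e\}$ is precisely where \Cref{Lie-Group} is applied, to the honest Lie group $G$ itself.

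Second, and more seriously, your main line in the totally disconnected case never actually produces what \Cref{e-basic}\,$(3)$ requires, namely infinitely many closed subgroups fixed by a \emph{single} power $T^n$. The closure of $\langle x\rangle$ is abelian but not $T$-invariant; the closed subgroup generated by the full $T$-orbit of $x$ is $T$-invariant but need not be abelian; an infinite profinite group need not contain any element of infinite order; and \Cref{subg-infinite} supplies infinitely many closed subgroups with no invariance attached. Your fallback, that an infinite family in the compact space $\Sub_G$ must accumulate and thereby yield a fixed point of some $T^n$, is not a valid inference, and pigeonhole over countably many powers cannot upgrade ``each subgroup is fixed by some $T^n$'' to ``one $T^n$ fixes infinitely many.'' The missing idea is the paper's use of ergodicity: for the ergodic part $H$ (infinite, satisfying the descending chain condition), Theorem 7.5 of \cite{KS89} gives density of the periodic points in $H$, so the sets $P_n=\{x\in H\mid T^n(x)=x\}$ form an increasing family of closed $T$-invariant subgroups, proper by ergodicity, whose union is dense in the infinite group $H$; hence infinitely many of them are distinct, and each is a fixed point of $T$ itself acting on $\Sub_H$, contradicting \Cref{e-basic}\,$(3)$. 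Without some substitute for this mechanism your outline does not close.
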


\begin{proof} The `if' statement is obvious. Now suppose $G$ is an infinite compact group. We show that the $T$-action on $\Sub_G$ is not expansive. 
As noted above, if $(G,T)$ does not satisfy the descending chain condition, then by \Cref{e-basic}\,$(3)$, the $T$-action on $\Sub_{G}$ is not expansive. 
Now suppose $(G,T)$ satisfies the descending chain condition.  
By Proposition 3.5 of \cite{Sc95}, there exists a compact normal $T$-invariant subgroup $H$ of $G$ such that $G/H$ is Lie group and 
the $T$-action on $H$ is ergodic. Suppose $H$ is trivial. Then $G$ is a compact Lie group with finitely many connected components. 
As $G$ is infinite, $G^0$ is nontrivial and by \Cref{Lie-Group}, the $T$-action on $\Sub_{G^0}$, and hence on $\Sub_G$ is not expansive. 

Now suppose $H$ is nontrivial. Then it is infinite. Let ${\rm Per}(T)$ be the set of all periodic points of $T$ in $H$. 
As $T|_H$ is ergodic and $(H,T|_H)$ also satisfies the descending chain condition, by Theorem 7.5 of \cite{KS89}, ${\rm Per}(T)$ is dense in $H$. 
Here ${\rm Per}(T)=\cup_{n\geq 1}{P_{n}}$, where $P_n=\{x\in H\mid T^n(x)=x\}$ and each $P_n$ is a $T$-invariant closed subgroup of $H$, $n\in\N$. 
Since $T$ is ergodic, so is $T^n$, and hence $T^n\ne\Id$ and $P_n\neq {\rm Per}(T)$ for every $n\in\N$. 
By \Cref{e-basic}\,$(3)$, the $T$-action on $\Sub_{H}$, and hence on $\Sub_G$ is not expansive.
 \end{proof}

\begin{corollary}\label{ConnGroup}
A nontrivial connected locally compact group $G$ does not admit any automorphism which acts expansively on $\Sub_{G}$.
\end{corollary}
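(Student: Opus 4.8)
The plan is to reduce the general connected case to the two results already established, namely \Cref{Lie-Group} and \Cref{Compact-Group}, by passing to a suitable $T$-invariant Lie quotient. Suppose, for contradiction, that $G$ is a nontrivial connected locally compact group and that some $T\in\Aut(G)$ acts expansively on $\Sub_G$. I would first dispose of the compact case: if $G$ is compact, then \Cref{Compact-Group} forces $G$ to be finite; but a finite connected (Hausdorff) group is trivial, contradicting the nontriviality of $G$. Hence it remains to treat the case in which $G$ is not compact.

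For the non-compact case, the idea is to produce a $T$-invariant closed normal subgroup $W$ with $G/W$ a nontrivial connected Lie group, and then apply \Cref{subg-q} together with \Cref{Lie-Group}. To this end I would invoke the structure theory of connected locally compact groups: such a $G$ possesses a maximal compact normal subgroup $W$ (this existence is the structural input I would cite). Uniqueness of $W$ is then immediate, since the product of two compact normal subgroups is again a compact normal subgroup, so a maximal one must be unique; consequently $W$ is characteristic and hence $T$-invariant. Moreover $G/W$ has no nontrivial compact normal subgroup, for the preimage in $G$ of such a subgroup would be a compact normal subgroup properly containing $W$; and by the Gleason--Yamabe approximation theorem a connected locally compact group without nontrivial compact normal subgroups is a Lie group. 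This is what makes $G/W$ a Lie group.

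Since $W$ is compact and $G$ is not compact, $G/W$ is a nontrivial (indeed non-compact) connected Lie group, and the induced automorphism $\ol{T}\in\Aut(G/W)$ is well defined because $W$ is $T$-invariant. By \Cref{subg-q}, $\ol{T}$ acts expansively on $\Sub_{G/W}$, and since $\Sub^a_{G/W}$ is a closed $\ol{T}$-invariant subset of $\Sub_{G/W}$, it follows from \Cref{e-basic}\,$(5)$ that $\ol{T}$ acts expansively on $\Sub^a_{G/W}$ as well. This contradicts \Cref{Lie-Group}, completing the argument.

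The step I expect to be the main obstacle is securing the $T$-invariance of the subgroup that yields the Lie quotient. The Gleason--Yamabe theorem only furnishes arbitrarily small compact normal subgroups with Lie quotient, and an individual such subgroup need not be preserved by $T$; moreover, intersecting over the entire $T$-orbit restores $T$-invariance but in general destroys the Lie property of the quotient. Routing the argument through the maximal compact normal subgroup is what circumvents this difficulty, since maximality forces that subgroup to be characteristic and hence automatically $T$-invariant. It is worth noting that expansivity also supplies strong rigidity here: by \Cref{e-basic}\,$(3)$, $T$ has only finitely many fixed points in $\Sub_G$, so $G$ admits only finitely many $T$-invariant closed subgroups, a constraint that could be leveraged in place of the maximality argument.
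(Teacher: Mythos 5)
Your proof is correct, and it rests on the same structural pillar as the paper's: the maximal compact normal subgroup $K$ (your $W$) of a connected locally compact group, which is characteristic and has Lie quotient, together with \Cref{Lie-Group} and \Cref{Compact-Group}. The difference is in how the reduction is organized. The paper splits on whether $K$ is finite or infinite: if $K$ is infinite it applies \Cref{Compact-Group} to the subgroup $K$ itself (restricting the action to the closed invariant subset $\Sub_K$), and if $K$ is finite it observes that $G$ is then itself a nontrivial connected Lie group and applies \Cref{Lie-Group} directly to $G$ --- so it never needs to pass to a quotient. You instead split on whether $G$ is compact, and in the non-compact case you go up to the quotient $G/W$ via \Cref{subg-q} and apply \Cref{Lie-Group} there; peeling off the compact case first is exactly what guarantees $G/W$ is nontrivial, and your justification that $G/W$ is Lie (no nontrivial compact normal subgroups plus Gleason--Yamabe) is sound. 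Both routes are equally short; the paper's avoids \Cref{subg-q} altogether, while yours avoids the observation that a connected group which is Lie modulo a finite subgroup is Lie. Your closing remark about the maximal compact normal subgroup being the right device for securing $T$-invariance is exactly the point, and matches the paper's (implicit) reasoning.
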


\begin{proof}
Since $G$ is connected, it admits the largest compact normal subgroup $K$ such that $G/K$ is a Lie group. 
Let $T\in\Aut(G)$. Here, $K$ is characteristic in $G$ and, in particular, it is $T$-invariant. 
If $K$ is infinite, then by \Cref{Compact-Group}, the $T|_{K}$-action on $\Sub_{K}$ is not expansive. 
If $K$ is finite, then $G$ is a Lie group and by \Cref{Lie-Group}, the $T$-action on $\Sub^a_G$, and hence on $\Sub_G$ is not expansive. 
\end{proof}

\section{Expansive Actions on Sub$_{\boldsymbol{G}}$ of Automorphisms of Locally Compact Groups $\boldsymbol{G}$} 
\label{sec:3}

In this section we prove certain results for the structure of locally compact groups $G$ which admits an automorphism $T$ that acts expansively on 
$\Sub_{G}$. We show that such $G$ is necessarily totally disconnected, $T$ is expansive (on $G$), and the contraction group of $T$ is 
either trivial or a finite direct product of $\Q_p$, for some distinct primes $p$.  We show that any automorphism of $G$, where $G$ is a finite 
direct product of $\Q_p$ for distinct primes $p$, is expansive if and only if it acts expansively on 
 $\Sub_G$. We also show that any higher dimensional $p$-adic vector space $\Q_p^n$, ($n\geq 2$), does not admit any automorphism
 which acts expansively on $\Sub_{\Q_p^n}$. 
 
For $T\in\Aut(G)$, let $M(T)=\{x\in G\mid \{T^n(x)\}_{n\in\Z}\mbox{ is relatively compact}\}$.
Then $M(T)$ is a $T$-invariant subgroup of $G$. If $G$ is totally disconnected, then $M(T)$ is closed 
\cite[Proposition 3]{Wi94} (see also Remark 3.1 in \cite{BW04}).

\begin{theorem}\label{Totally-disconnected}
Let $G$ be a locally compact group $G$ and let $T\in\Aut(G)$ be such that 
the $T$-action on $\Sub_{G}$ is expansive. Then the following  hold: 
\begin{enumerate}
\item[$(1)$] $G$ is totally disconnected.
\item[$(2)$]  $T$ is expansive.
\item[$(3)$]  $C(T)$ and $C(T^{-1})$ are closed, and $C(T)C(T^{-1})$ is open.
\item[$(4)$] $M(T)$ is finite.
\end{enumerate}
\end{theorem}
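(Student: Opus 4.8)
The plan is to prove the four parts in a logical order, using the earlier results and the structure theory of locally compact groups. The overall strategy is to reduce each claim about the action on $\Sub_G$ to a statement about $T$ acting on $G$ itself, exploiting the fact that expansivity transfers to closed invariant subgroups (\Cref{e-basic}\,(5)) and to quotients (\Cref{subg-q}).

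\textbf{Part (1): $G$ is totally disconnected.} First I would consider the connected component $G^0$ of the identity. Since $G^0$ is a characteristic (hence $T$-invariant) closed subgroup, \Cref{subg-q} shows that $T|_{G^0}$ acts expansively on $\Sub_{G^0}$. If $G^0$ were nontrivial, then $G^0$ is a nontrivial connected locally compact group, and by \Cref{ConnGroup} it admits no automorphism acting expansively on $\Sub_{G^0}$ --- a contradiction. Hence $G^0=\{e\}$ and $G$ is totally disconnected.

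\textbf{Part (2): $T$ is expansive.} Now that $G$ is totally disconnected, by van Dantzig's theorem $G$ has a neighbourhood basis at $e$ consisting of compact open subgroups. The idea is that expansivity of the action on $\Sub_G$ forces a $\delta>0$ such that distinct closed subgroups are separated in the Chabauty metric by some power of $T$. I would specialize this to the singleton-generated cyclic subgroups: for $x\neq e$, the closures of the groups generated by $T^n(x)$ should separate from $\{e\}$, and conversely any $x$ lying in $\cap_{n\in\Z}T^n(U)$ for a small compact open $U$ would generate subgroups that fail to be separated from $\{e\}$. The cleanest route is to invoke \Cref{subg-infinite}: if $T$ were not expansive on $G$, one typically produces an infinite family of closed $T$-invariant (or $T^n$-fixed) subgroups, contradicting \Cref{e-basic}\,(3). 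I expect the technical heart here is to translate ``$\cap_{n}T^n(U)\neq\{e\}$'' into a violation of expansivity on $\Sub_G$, which is where the main obstacle lies.

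\textbf{Parts (3) and (4).} Once $T$ is known to be expansive on the totally disconnected $G$, parts (3) and (4) should follow from the established structure theory of expansive automorphisms of totally disconnected locally compact groups. For part (3), the expansivity of $T$ together with total disconnectedness gives (by results such as those of Gl\"ockner--Raja or the references in \cite{Sh19}) that the contraction groups $C(T)$ and $C(T^{-1})$ are closed and that $G=C(T)M(T)C(T^{-1})$ with $C(T)C(T^{-1})$ open; I would cite the appropriate structure result and verify its hypotheses are met. For part (4), $M(T)$ is closed by \cite{Wi94} as noted before the theorem; since $T|_{M(T)}$ leaves a compact $T$-invariant subgroup situation, $M(T)$ is a compact $T$-invariant subgroup on which $T$ still acts expansively on $\Sub_{M(T)}$ via \Cref{subg-q}. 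Then \Cref{Compact-Group} forces $M(T)$ to be finite. This last reduction --- showing $M(T)$ is compact so that \Cref{Compact-Group} applies --- is the key step for part (4), and confirming the openness of $C(T)C(T^{-1})$ in part (3) is the most delicate point overall.
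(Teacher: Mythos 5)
Your part (1) is the paper's argument. Part (2) has the right shape but you stop exactly at the point that needs doing; for the record, it is short: if $G$ is not discrete and $T$ is not expansive, take a neighbourhood basis $\{U_n\}$ of $e$ consisting of compact open subgroups (van Dantzig) and set $K_n=\cap_{k\in\Z}T^k(U_n)$. Non-expansivity makes every $K_n$ a nontrivial closed $T$-invariant subgroup, and since $K_n\subset U_n$ and the $U_n$ shrink to $\{e\}$, infinitely many of the $K_n$ are distinct. These are infinitely many fixed points of the $T$-action on $\Sub_G$, contradicting \Cref{e-basic}\,(3).

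Parts (3) and (4) contain genuine gaps. For (3), your premise that expansivity of $T$ on a totally disconnected group forces $C(T)$ to be closed is false: the shift on $F^{\Z}$ ($F$ finite nontrivial) is an expansive automorphism of a compact totally disconnected group whose contraction group is the dense, non-closed subgroup of sequences eventually trivial at $-\infty$. So there is no structure theorem to cite that runs off expansivity of $T$ alone; the expansivity of the action on $\Sub_G$ must enter. The paper's route is to take the compact $T$-invariant group $K=\overline{C(T)}\cap\overline{C(T^{-1})}$ with $\overline{C(T)}=KC(T)$ (Baumgartner--Willis \cite{BW04}), note that $T|_K$ acts expansively on $\Sub_K$, conclude $K$ is finite by \Cref{Compact-Group}, and then use the density of $C(T)\cap M(T)$ in $K$ to force $K=\{e\}$, which gives closedness; only then does openness of $C(T)C(T^{-1})$ follow from \cite[Lemma 1.1(d)]{GR17}. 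For (4), your reduction rests on the assertion that $M(T)$ is compact, which you never justify and which does not follow from anything established: $M(T)$ is only known to be closed, and a priori it could be an infinite non-compact (e.g.\ discrete) group, so \Cref{Compact-Group} cannot be applied to it directly. The paper instead applies part (3) to $T|_{M(T)}$: both restricted contraction groups are trivial, so $\{e\}$ is open in $M(T)$ and $M(T)$ is discrete; then every orbit is finite, $M(T)=\cup_n P'_n$ with $P'_n=\{x\mid T^{n!}(x)=x\}$ an increasing chain of closed $T$-invariant subgroups, \Cref{e-basic}\,(3) forces the chain to stabilize (so $T^{n!}=\Id$ on $M(T)$), and \Cref{subg-infinite} then gives finiteness.
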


\begin{proof}
\noindent $(1):$ As $T|_{G^0}$ is expansive, $G^0$ is trivial by \Cref{ConnGroup}. 
Hence $G$ is totally disconnected.

\smallskip
\noindent $(2):$ If $G$ is discrete, then $T$ is expansive. 
Suppose $G$ is not discrete and suppose $T$ is not expansive. 
There exists $\{U_n\mid n\in\N\}$, a neighbourhood basis of the identity $e$ consisting of compact open subgroups such that 
$K_n=\cap_{k\in\Z}T^k(U_n)\neq\{e\}$, $n\in\N$. Note that $K_n$ is a nontrivial closed $T$-invariant subgroup of $G$, $n\in\N$.  
By \Cref{e-basic}\,$(3)$, the $T$-action on $\Sub_G$ is also not expansive.

\smallskip
\noindent $(3):$ By Corollaries 3.27 and 3.30 of \cite{BW04}, there exists a compact $T$-invariant subgroup $K=\overline{C(T)}\cap\overline{C(T^{-1})}$ 
such that $\overline{C(T)}=C_K(T)=KC(T)$. Since the $T$-action on $\Sub_G$ is expansive and $T(K)=K$, $T|_K$ acts expansively on $\Sub_K$. 
Then $K$ is finite by \Cref{Compact-Group}. By Lemma 3.31\,$(2)$ of \cite{BW04}, we get that $C(T)\cap M(T)$ is dense in $K$. 
Since $K$ is finite and $T$-invariant, and since $M(T)$ is closed and $T$-invariant, 
we have that $C(T|_{M(T)})=C(T)\cap M(T)=C(T)\cap K=C(T|_K)=\{e\}$. Therefore $K=\{e\}$, and hence $C(T)$ is closed. 
Replacing $T$ by $T^{-1}$, we get that $C(T^{-1})\cap M(T)=\{e\}$ and $C(T^{-1})$ is also closed. 
By  Lemma 1.1\,(d) of \cite{GR17}, $C(T)C(T^{-1})$ is open in $G$ and (3) holds. 

\smallskip
\noindent $(4):$ As observed above, $M(T)$ is a closed $T$-invariant subgroup. As shown in the proof of $(3)$, 
$C(T|_{M(T)})=M(T)\cap C(T)=\{e\}=M(T)\cap C(T^{-1})=C(T^{-1}|_{M(T)})$ 
(see also Theorem 3.32 in \cite{BW04}). By $(3)$, $\{e\}$ is open in $M(T)$, and hence $M(T)$ is discrete. 
Now for every $x\in M(T)$, the $T$-orbit of $x$ is relatively compact, and hence finite. Therefore, 
$M(T)={\rm Per}(T)=\cup_{n\in\N}P'_n$, where $P'_n=\{x\in M(T)\mid T^{n!}(x)=x\}$. Each $P'_n$ is a closed $T$-invariant subgroup and 
$P'_n\subset P'_{n+1}$, $n\in\N$. If possible, suppose $P'_n\ne P'_{n+1}$ for infinitely many $n$. By \Cref{e-basic}\,$(3)$, the $T$-action on 
$\Sub_{M(T)}$ is not expansive, which leads to a contradiction. Therefore, $M(T)=P'_n$ for some $n$, and hence $T^{n!}=\Id$.
By \Cref{subg-infinite}, $M(T)$ is finite.
\end{proof}

\begin{corollary}\label{discrete}
Let $G$ be a locally compact group $G$ and let $T\in\Aut(G)$. 
If the $T$-action on $G$ is distal and the $T$-action on $\Sub_{G}$ is expansive, then $G$ is discrete.
\end{corollary}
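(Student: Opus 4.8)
The plan is to combine the structural consequences of \Cref{Totally-disconnected} with the known dichotomy for expansive automorphisms stated in the introduction (namely, that an expansive $T$ on a nondiscrete $G$ cannot be distal and forces $C(T)$ or $C(T^{-1})$ to be nontrivial, by Theorem 2.9 of \cite{Sh19}). Since the hypothesis gives that the $T$-action on $\Sub_G$ is expansive, \Cref{Totally-disconnected}\,$(2)$ immediately tells us that $T$ is expansive as an automorphism of $G$, and part $(1)$ tells us $G$ is totally disconnected. So the real content to extract is that an expansive \emph{and} distal automorphism of a locally compact group can only exist when $G$ is discrete.

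First I would argue by contradiction: assume $G$ is not discrete. Because $T$ acts expansively on $\Sub_G$, \Cref{Totally-disconnected} applies and $T$ is expansive on $G$; moreover $G$ is totally disconnected, so $G$ is in particular nondiscrete and locally compact. Now I invoke the dichotomy recalled in the introduction: for an expansive $T$ on a nondiscrete $G$, either $C(T)$ or $C(T^{-1})$ is nontrivial (this is the cited consequence of Theorem 2.9 of \cite{Sh19}). Pick $x\neq e$ lying in, say, $C(T)$, so that $T^n(x)\to e$ as $n\to\infty$.

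The next step is to contradict distality. Distality of the $T$-action on $G$ means that for the pair $(x,e)$ the closure of the double orbit $\{(T^n(x),T^n(e))\mid n\in\Z\} = \{(T^n(x),e)\mid n\in\Z\}$ must avoid the diagonal. But since $x\in C(T)$ we have $T^n(x)\to e$, so the point $(e,e)$ on the diagonal is a limit point of the double orbit, contradicting distality. (Symmetrically, if it is $C(T^{-1})$ that is nontrivial, run the same argument with $n\to -\infty$.) Hence the assumption that $G$ is nondiscrete is untenable, and $G$ must be discrete.

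The main obstacle, and the only place requiring care, is to make sure the distality hypothesis is being applied to the correct object: the corollary hypothesizes that the $T$-action \emph{on $G$} (not on $\Sub_G$) is distal, and it is precisely this that conflicts with a nontrivial contraction group. The rest is a direct quotation of \Cref{Totally-disconnected}\,$(2)$ together with the $\cite{Sh19}$ dichotomy, so no delicate estimates are needed; I expect the proof to be quite short once the contradiction between a nontrivial contraction group and distality is spelled out.
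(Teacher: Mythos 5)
Your proof is correct and follows essentially the same route as the paper: both reduce to the fact that distality of $T$ on $G$ forces $C(T)=C(T^{-1})=\{e\}$, after using \Cref{Totally-disconnected}\,$(2)$ to get expansivity of $T$ on $G$. The only cosmetic difference is the last step — the paper concludes directly from \Cref{Totally-disconnected}\,$(3)$ that $C(T)C(T^{-1})=\{e\}$ is open (hence $G$ is discrete), while you argue by contradiction via the dichotomy from Theorem 2.9 of \cite{Sh19}, which the paper itself cites as an alternative.
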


\begin{proof}
If the $T$-action on $\Sub_G$ is expansive, then by \Cref{Totally-disconnected}\,$(2)$, we get that $T$ is expansive. 
Moreover, if the $T$-action on $G$ is also distal, then $C(T)=C(T^{-1})=\{e\}$, and by \Cref{Totally-disconnected}\,$(3)$, $G$ is discrete 
(see also \cite[Theorem 2.9]{Sh19}).
\end{proof}

From \Cref{Totally-disconnected}, we know that if $G$ admits an automorphism which acts expansively on $\Sub_G$, it is necessarily totally disconnected. 
\Cref{p-adic} below shows that $\Q_p$ admits such an automorphism, while none of the $\Q_p^n$, $n\geq 2$, does.

Recall that for a fixed prime $p$, and $a/b\in\Q\setminus\{0\}$, the $p$-adic absolute value of $a/b$ is defined as $|a/b|_p=p^{-n}$ if $a/b=p^nc/d$ for some $n\in\Z$, 
where $c$ and $d$ are co-prime to $p$, and $|0|_p=0$. 
Observe that $|\cdot|_p$ defines a norm on $\Q$, and $\Q_p$ is a completion of $\Q$ with respect to the metric induced by 
this norm. Moreover, $|\cdot|_p$ extends canonically to $\Q_p$ such that $|xy|_p=|x|_p|y|_p$, $x,y\in\Q_p$. 
It is a locally compact totally disconnected field of characteristic zero, and $\Q_p^n$ is the $n$-dimensional $p$-adic vector space. 
Here, we consider $\Q_p^n$ as a group with addition as the operation. Any automorphism of $\Q_p^n$ belongs to $\GL(n,\Q_p)$. 
Any automorphism $T$ of $\Q_p$ is of the form $T(x)=qx$, $x\in\Q_p$, where $q\in\Q_p^*$, i.e.\ $q=p^mz$, for some $m\in\Z$ 
and some $z\in \Z_p^*=\{x\in\Q_p\mid |x|_p=1\}$. Note that $\Z_p^*$ is a compact multiplicative group. Observe that such a $T$ is 
expansive only if  $q\not\in\Z_p^*$, i.e.\ $m\ne 0$.  

\begin{proposition}\label{p-adic}
Let $T\in\GL(n,\Q_p)$ for some $n\in\N$.
\begin{enumerate}
\item For $n=1$, the $T$-action on $\Sub_{\Q_p}$ is expansive if only if $T$ is expansive.  
\item For $n\geq 2$, the $T$-action on $\Sub_{\Q_p^n}$ is not expansive. That is, $\Q_p^n$, $(n\geq 2)$, does not admit any automorphism which 
acts expansively on $\Sub_{\Q_p^n}$. 
\end{enumerate}
\end{proposition}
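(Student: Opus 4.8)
My plan is to treat the two statements separately, reducing each to the induced action on a compact space of lines. For Part (1) the forward implication is immediate: if the $T$-action on $\Sub_{\Q_p}$ is expansive, then $T$ is expansive by \Cref{Totally-disconnected}\,$(2)$. For the converse I would use the stated normal form $T(x)=qx$ with $q=p^mz$, $m\in\Z$, $z\in\Z_p^*$, where $T$ is expansive exactly when $m\neq 0$. The closed subgroups of $\Q_p$ are $\{0\}$, $\Q_p$ and the compact open subgroups $p^k\Z_p$, $k\in\Z$; since $z\Z_p=\Z_p$ we get $T(p^k\Z_p)=p^{k+m}\Z_p$, while $\{0\}$ and $\Q_p$ are fixed. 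Thus $\Sub_{\Q_p}$ is homeomorphic to the two-point compactification $\{-\infty\}\cup\Z\cup\{+\infty\}$ of the discrete integers (the $p^k\Z_p$ are isolated, with $p^k\Z_p\to\{0\}$ as $k\to+\infty$ and $p^k\Z_p\to\Q_p$ as $k\to-\infty$), and $T$ acts as the shift $k\mapsto k+m$ fixing the two endpoints. Since expansivity is metric independent (\Cref{e-basic}\,$(2)$), I would fix the metric coming from the order embedding $k\mapsto\tfrac{2}{\pi}\arctan k$ of $\{-\infty\}\cup\Z\cup\{+\infty\}$ into $[-1,1]$ and then check that a single $\delta>0$ separates every pair: the two endpoints sit at maximal distance; a finite point and an endpoint separate after enough forward or backward shifts; and two finite points, whose index gap $\ell\geq 1$ is preserved by the shift, attain a separation bounded below by a positive constant once the shift brings the pair across the centre. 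Monotonicity of that separation in $\ell$, together with the fact that only finitely many residues modulo $m$ occur, yields the required uniform $\delta$, so $T$ acts expansively.

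For Part (2) the key reduction is to the closed $T$-invariant subset $\mathcal L_W\subseteq\Sub_{\Q_p^n}$ of one-dimensional $\Q_p$-subspaces (lines) contained in a $T$-invariant subspace $W$; the map $[v]\mapsto\Q_p v$ identifies $\mathcal L_W$ with the projective space $\mathbb P(W)$, which is closed in $\Sub_{\Q_p^n}$ (a Chabauty limit of lines through unit vectors is again such a line) and carries the induced projective action $\bar T$. By the contrapositive of \Cref{e-basic}\,$(5)$ it suffices to produce one such $W$ on which $\bar T$ is not expansive. To organise this I would use the decomposition $\Q_p^n=V_1\oplus\cdots\oplus V_r$ into the $T$-invariant subspaces on which all eigenvalues (in $\overline{\Q_p}$) share a fixed $p$-adic absolute value $\rho_1>\cdots>\rho_r$; this is defined over $\Q_p$ because the Galois group of $\overline{\Q_p}/\Q_p$ preserves $|\cdot|_p$.

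Two cases then arise. If some $V_s$ has $\dim V_s\geq 2$, set $S:=T|_{V_s}$; all its eigenvalues have a single absolute value $\rho_s=p^{-c}$ with $c\in\tfrac{1}{\dim V_s}\Z$, and writing $c=a/b$ in lowest terms the automorphism $p^{-a}S^b$ has all eigenvalues of absolute value $1$, hence generates a relatively compact subgroup of $\GL(\dim V_s,\Q_p)$. As scalars act trivially on lines, $\bar S^{b}$, and therefore the whole group $\langle\bar S\rangle$ (a finite extension of $\langle\bar S^{b}\rangle$), lies in a compact subgroup of $\mathrm{PGL}(\dim V_s,\Q_p)$. Writing $\bar S=\mathrm{Id}\cdot\bar S$, \Cref{GroupAction} shows $\bar S$ is expansive on $\mathcal L_{V_s}$ if and only if $\mathrm{Id}$ is; but $\mathcal L_{V_s}$ is an infinite closed subset of $\Sub_{\Q_p^n}$, so $\mathrm{Id}$ is not expansive by \Cref{subg-infinite}, and neither is $\bar S$. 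If instead every $V_s$ is a line, then $r=n\geq 2$, $T$ is diagonalizable over $\Q_p$, and its eigenvalues $\lambda_1,\dots,\lambda_n\in\Q_p$ have pairwise distinct absolute values; taking $W=V_1\oplus V_2$ reduces matters to $\Q_p^2$ with $T=\mathrm{diag}(\lambda_1,\lambda_2)$, $|\lambda_1|\neq|\lambda_2|$. Here $\bar T$ is the North--South map $z\mapsto(\lambda_1/\lambda_2)z$ of $\mathbb P^1(\Q_p)$, and I would exhibit non-expansivity explicitly: for $|\lambda_1/\lambda_2|<1$ the distinct lines through $(1,1)$ and $(1+p^N,1)$ stay within $O(p^{-N})$ of each other under all iterates (in the chordal metric both forward and backward orbits collapse toward the two fixed lines), so no expansive constant can exist.

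The main obstacle I expect is the single-weight case: one must know that a matrix over $\Q_p$ whose eigenvalues all have absolute value $1$ generates a relatively compact group, which requires controlling the unipotent part via $|\binom{k}{j}|_p\leq 1$, and one must pass to a power and clear a $\Q_p$-scalar to handle eigenvalues of fractional valuation before the projective action becomes equicontinuous. The remaining ingredients—the identification of $\mathcal L_W$ as a closed invariant subset and the North--South estimate—are routine $p$-adic computations.
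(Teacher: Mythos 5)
Your proposal is correct, and for part (2) it takes a genuinely different route from the paper's. For part (1) the substance is the same: both arguments rest on the classification of the closed subgroups of $\Q_p$ as $\{0\}$, $\Q_p$ and $p^k\Z_p$, so that an expansive $T$ acts on the resulting two-point compactification of $\Z$ as a nontrivial shift; where you verify expansivity with an explicit order-embedding metric, the paper runs a shorter compactness argument (failure of separation at every scale would force $p^{l_k}\Z_p\to\Z_p$ with $l_k\geq 1$, which is impossible), but nothing essential differs. For part (2) the paper invokes Wang's decomposition $T^m=UD$ with $U$ generating a relatively compact group and $D$ diagonal with entries powers of $p$, strips off $U$ via \Cref{GroupAction}, reduces to $D=\mathrm{diag}(p,1)$ acting on the set $\HH$ of lines in $\Q_p^2$, and concludes by Reddy's theorem, since uncountably many lines $(a,1)\Q_p$ converge to the fixed line $(0,1)\Q_p$. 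You avoid both external citations: your decomposition into weight spaces by absolute values of eigenvalues (legitimately defined over $\Q_p$ by Galois invariance of $|\cdot|_p$) splits the problem into a single-weight case of multiplicity at least $2$, killed because after passing to a power and clearing a scalar the projective action lies in a compact subgroup of $\mathrm{PGL}$ so that \Cref{GroupAction} reduces to the identity acting on an infinite closed set, and a distinct-weights case, killed by your explicit chordal-metric estimate for the North--South map on $\mathbb{P}^1(\Q_p)$ (which checks out: the lines through $(1,1)$ and $(1+p^N,1)$ indeed stay within $p^{-N}$ for all $n\in\Z$). Your version is more self-contained and exposes the dynamical mechanism; the paper's is shorter because Wang's theorem supplies the linear algebra and Reddy's theorem the dynamics in one stroke. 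The only points you should write out in full are the Chabauty-closedness of the set of lines in a $T$-invariant subspace (which you correctly flag, and which the paper also takes as clear) and the standard fact that a matrix over $\Q_p$ with all eigenvalues of absolute value $1$ generates a relatively compact subgroup of $\GL$, for which your binomial-coefficient control of the unipotent part is exactly the right ingredient.
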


\begin{proof}
$(1):$ The `only if' statement follows from  \Cref{Totally-disconnected}\,$(1)$. 
Now suppose $T$ acts expansively on $\Q_p$. Then $T=p^ma\,\Id$, for some $m\in\Z\setminus\{0\}$ for $a\in \Z_p^*$, i.e.\ $|a|_p=1$. 
Note that any closed subgroup of $\Q_p$ is equal to $\{0\}$, $\Q_p$ or $p^n\Z_p$ for some $n\in\Z$, where $\Z_p=\{x\in\Q_p\mid |x|_p\leq 1\}$. 
Observe that the $T$-action on $\Sub_{\Q_p}$ is the same as the $bT$-action on $\Sub_{\Q_p}$ for any $b\in\Z_p^*$. 
By \Cref{e-basic}\,$(4)$, without loss of any generality we may assume that $T=p\,\Id$. 

If possible, suppose the $T$-action on $\Sub_{\Q_p}$ is not expansive. 
Let $\db$ be a metric on $\Sub_{\Q_p}$. 
Let $k_0$ be such that 
$$
\frac{1}{k_0}<\db(H_1,H_2) \mbox{ for all }H_1,H_2\in\{\{0\},\Z_p,\Q_p\}\mbox{ with }H_1\ne H_2.$$
Let $k\in\N$ be such that $k>k_0$. Since the $T$-action on $\Sub_{\Q_p}$ is not expansive, for any such $k$, there exist $i_k,j_k\in\Z$ with 
$i_k< j_k$ such that for all $n\in\Z$,
$$
\db(T^n(p^{i_k}\Z_p),T^n(p^{j_k}\Z_p))<\frac{1}{k}.$$
Putting $n=-i_k$, we get for $l_k=j_k-i_k$, $k\in\N$ with $k> k_0$ that 
$$
\db(\Z_p,p^{l_k}\Z_p)<\frac{1}{k},$$
and hence, $p^{l_k}\Z_p\to\Z_p$. 
But as $l_k\in\N$, passing to a subsequence if necessary, we get that either $l_k\to\infty$ and $p^{l_k}\Z_p\to \{0\}$ in $\Sub_{\Q_p}$, or  
$l_k=l\in\N$ and $p^{l_k}\Z_p= p^l\Z_p\ne\Z_p$, $k\in\N$. 
In either case, we get  a contradiction. 
Hence the $T$-action on $\Sub_{\Q_p}$ is  expansive.

\smallskip
\noindent $(2):$ Let $n\geq 2$ and let $T\in\GL(n,\Q_p)$.
We show that the $T$-action on $\Sub_{\Q_p^n}$ is not expansive. 
By 3.3 of \cite{Wa84}, there exists $m\in\N$ such that $T^m=SAD$, where $S$, $A$ and $D$ are semisimple, unipotent and 
diagonal matrix respectively, $A$, $S$ and $D$ commute with each other and $A$ as well as $S$ generate a relatively compact group. 
Take $U=SA$.
Then $T^m=UD=DU$ where $U$ generates a compact group and $D(x_1,\ldots,x_n)=(p^{k_1}x_1,\ldots,p^{k_n}x_n)$ with $k_i\in\Z$, $1\leq i\leq n$. 
By \Cref{e-basic}\,$(4)$, it is enough to show that the $T^m$-action on $\Sub_{\Q_p^n}$ is not expansive. Note that $\GL(n,\Q_p)$ is a metrizable 
topological group. Using the criteria for convergence of sequences in $\Sub_G$ for a metrizable group $G$ as in \cite{BP92}, it is easy to see that 
$\GL(n,\Q_p)$ acts continuously on $\Sub_{\Q_p^n}$ by homeomorphisms. By \Cref{GroupAction}, it is enough to show that the $D$-action on 
$\Sub_{\Q_p^n}$ is not expansive. 

Let $H=\{(x_1,\dots, x_n)\mid x_i=0 \mbox{ for all } i\geq 3\}$. $H\cong\Q_p^2$ is a $D$-invariant closed subgroup 
of $\Q_p^n$, and it enough to show that the $D|_H$-action on $\Sub_H$ is not expansive. 
Hence we may assume that $n=2$, i.e.\ $D(x_1,x_2)=(p^{k_1}x_1,p^{k_2}x_2)$ for some $k_1,k_2\in\Z$.  

Let $\HH$ be the collection of all closed subgroups of $\Q_p^2$ of the form $(a,b)\Q_p$ with $(a,b)\in\Q_p^2\setminus\{(0,0)\}$ (Here,  
$\HH$ is the set of $p$-adic lines in $\Q_p^2$). It is clear that $\HH$ is a $D$-invariant closed subset of $\Sub_{\Q_p^2}$.
We will show that the $D$-action on $\HH$ is not expansive. 
Since the $D$-action on $\HH$ is the same as that of $p^{-k_2}D$, we may replace $D$ by $p^{-k_2}D$ and assume that $k_2=0$. 
Now if $k_1=0$, then $D=\Id$ and by \Cref{subg-infinite}, the $D$-action on $\HH\subset\Sub_{\Q_p^2}$ is not expansive. 
Suppose $k_1\ne 0$. Since $D=D_1^{k_1}$, where $D_1(x,y)=(px_1,x_2)$, $x_1,x_2\in\Q_p$, and $D_1$ also keeps $\HH$ invariant,
 by \Cref{e-basic}\,$(4)$ we may replace $D$ by $D_1$ and assume that $D(x_1,x_2)=(px_1, x_2)$, $x_1,x_2\in\Q_p$.
Observe that $D((0,1)\Q_p)=(0,1)\Q_p$ and  that $D^n((a,1)\Q_p)\to (0,1)\Q_p$ as $n\to\infty$, for all $a\in\Q_p$. 
Moreover, the set $\{(a,1)\Q_p\mid a\in\Q_p\}$ is uncountable as $\Q_p$ is so. 
By Theorem 1 of \cite{Re70}, the $D$-action on $\HH$ is not expansive. 
Therefore, the $D$-action on $\Sub_{\Q_p^2}$ is not expansive.
\end{proof}

\begin{remark} \label{rem2}
Since the metric on $\Q_p^n$ (naturally defined by the $p$-adic norm) is proper, \Cref{p-adic} can also be proven using the explicit metric 
on $\Sub_{\Q_p^n}$ given as in \cite{Ge18}. As mentioned in \cite{Ge18}, this metric on $\Sub_G$ for a locally compact metrizable group $G$ has 
been suggested by Biringer in \cite{Bi18}.  
\end{remark}

The following example illustrates that the converse of \Cref{subg-q} does not hold in general. 

\begin{example}\label{ex}
Let $T=p\,\Id$ in $\GL(2,\Q_p)$.  
If $H=\Q_p\times\{0\}$, then $H$ is $T$-invariant and by \Cref{p-adic}\,(1), the $T$-action on both 
$\Sub_{H}$ and $\Sub_{\Q_p^2/H}$ are expansive, but the $T$-action on $\Sub_{\Q_p^2}$ is not expansive by \Cref{p-adic}\,(2).
\end{example}

\begin{corollary} \label{n-primes} For a locally compact group $G$ and $T\in\Aut(G)$, the following hold:
 \begin{enumerate}
 \item If $G=\Q_{p_1}^{m_1}\times\cdots\times \Q_{p_n}^{m_n}$ for distinct primes $p_1,\ldots, p_n\in\N$ and 
 $m_1,\ldots, m_n\in\N$, and if $T$ is expansive, then $T$ keeps $\Q_{p_i}^{m_i}$ invariant for each $i$ and $G=C(T)\times C(T^{-1})$. 
 Moreover if the $T$-action on $\Sub_G$ is expansive, then $m_1=\cdots =m_n=1$.
 \item If $G=\Q_{p_1}\times\cdots\times \Q_{p_n}$ for distinct primes $p_1,\ldots, p_n\in\N$, then $T$ is expansive if and only if 
 the $T$-action on $\Sub_G$ is expansive. 
 \end{enumerate}
\end{corollary}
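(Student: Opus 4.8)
The plan is to handle the two parts in turn, reducing everything to the one-factor results in \Cref{p-adic} together with the behaviour of expansivity under finite products.

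For part (1), I would first observe that each factor $\Q_{p_i}^{m_i}$ is automatically $T$-invariant, in fact for \emph{every} $T\in\Aut(G)$, because any continuous homomorphism $\phi\colon\Q_{p_i}\to\Q_{p_j}$ with $i\ne j$ is trivial. Indeed, $\phi(\Z_{p_i})$ is a compact subgroup of $\Q_{p_j}$, and since $p_j$ is a unit in $\Z_{p_i}$ we have $\phi(\Z_{p_i})=\phi(p_j\Z_{p_i})=p_j\phi(\Z_{p_i})$; as the only $p_j$-divisible compact subgroup of $\Q_{p_j}$ is $\{0\}$, it follows that $\phi|_{\Z_{p_i}}=0$, and then $\phi=0$ on $\Q_{p_i}=\bigcup_k p_i^{-k}\Z_{p_i}$ using torsion-freeness. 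Hence $\Aut(G)=\prod_i\GL(m_i,\Q_{p_i})$ and $T$ is block-diagonal with blocks $T_i:=T|_{\Q_{p_i}^{m_i}}\in\GL(m_i,\Q_{p_i})$. Since $T$ is expansive, each $T_i$ is expansive by \Cref{e-basic}\,(5) (see \Cref{rem1}); so no eigenvalue of $T_i$ has $p_i$-adic absolute value $1$, whence $\Q_{p_i}^{m_i}$ splits as the direct sum of its contracting and expanding subspaces, i.e.\ $\Q_{p_i}^{m_i}=C(T_i)\times C(T_i^{-1})$. Taking products over $i$ gives $G=C(T)\times C(T^{-1})$. For the `moreover', if the $T$-action on $\Sub_G$ is expansive, then by \Cref{subg-q} the $T_i$-action on $\Sub_{\Q_{p_i}^{m_i}}$ is expansive, and by \Cref{p-adic}\,(2) this forces $m_i=1$ for every $i$.

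For part (2), the direction ``$\Sub_G$-expansive $\Rightarrow T$ expansive'' is immediate from \Cref{Totally-disconnected}\,(2). For the converse, suppose $T$ is expansive. By part (1) (with all $m_i=1$), $T$ preserves each $\Q_{p_i}$ and acts there as $q_i\,\Id$ with $|q_i|_{p_i}\ne 1$, so by \Cref{p-adic}\,(1) the $T_i$-action on $\Sub_{\Q_{p_i}}$ is expansive. It then remains to transfer this to $\Sub_G$, and the key structural input is that $\Sub_G$ is the product of the factors' subgroup spaces. Concretely, I would show that every closed subgroup $H\le G=\prod_i\Q_{p_i}$ decomposes as $H=\prod_i H_i$ with $H_i\in\Sub_{\Q_{p_i}}$: given $h=(h_1,\dots,h_n)\in H$, the Chinese Remainder Theorem provides integers $m_k$ with $m_k\equiv 1\ (\mathrm{mod}\ p_i^{k})$ and $m_k\equiv 0\ (\mathrm{mod}\ p_j^{k})$ for $j\ne i$, so that $m_k h\to(0,\dots,h_i,\dots,0)$, which lies in $H$ since $H$ is closed; hence each coordinate slice of $h$ lies in $H$, giving the decomposition. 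Using the convergence criterion for the Chabauty topology \cite[Prop.\ E.1.2]{BP92} coordinatewise, the resulting bijection $\Sub_G\to\prod_i\Sub_{\Q_{p_i}}$ is a homeomorphism, and by construction it conjugates the $T$-action to the product of the $T_i$-actions. Finally, a finite product of expansive homeomorphisms of compact metric spaces is expansive: with the max-metric and expansivity constants $\delta_i$, set $\delta=\min_i\delta_i$; two distinct points differ in some coordinate $i_0$, where a suitable power of $T_{i_0}$ separates them by more than $\delta_{i_0}\ge\delta$, and that same power separates the product points by more than $\delta$. Combined with \Cref{p-adic}\,(1), this shows the $T$-action on $\Sub_G$ is expansive.

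The main obstacle I anticipate is the structural step in part (2): proving that every closed subgroup of $\prod_i\Q_{p_i}$ splits as a product and that the induced identification $\Sub_G\cong\prod_i\Sub_{\Q_{p_i}}$ is a homeomorphism for the Chabauty topology which intertwines the two actions. Once this equivariant identification is in place, the expansivity of the product action is a soft argument and \Cref{p-adic}\,(1) closes the proof.
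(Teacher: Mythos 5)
Your argument is correct. Part (2) follows essentially the paper's own route: the paper likewise proves that every closed subgroup of $\prod_i\Q_{p_i}$ splits as $\prod_i H_i$ by multiplying an element of $H$ by a sequence of integers converging to the coordinate idempotent (it uses powers of $l=(p_1\cdots p_n)/p_i$ and compactness of $\Z_{p_i}^*$ where you use the Chinese Remainder Theorem --- the same device), identifies $\Sub_G$ with $\prod_i\Sub_{\Q_{p_i}}$, and then cites \cite[Theorem 2.2.5]{AH94} for the fact that a finite product of expansive homeomorphisms is expansive, which you instead prove directly with the max-metric. Part (1) is where you genuinely diverge. The paper deduces the splitting $G=C(T)\times C(T^{-1})$ and the invariance of the factors from Theorem B of \cite{GW10} (structure of totally disconnected contraction groups) together with the openness of $C(T)C(T^{-1})$ from \cite[Lemma 1.1(d)]{GR17}; in particular it only obtains the $T$-invariance of the $\Q_{p_i}^{m_i}$ under the expansivity hypothesis. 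You instead prove the elementary fact that every continuous homomorphism $\Q_{p_i}\to\Q_{p_j}$ ($i\ne j$) vanishes, so that $\Aut(G)=\prod_i\GL(m_i,\Q_{p_i})$ and the factors are invariant under \emph{every} automorphism --- a strictly stronger and more self-contained conclusion --- and then get $\Q_{p_i}^{m_i}=C(T_i)\times C(T_i^{-1})$ from the eigenvalue picture. The trade-off is that your route leans on the (standard, but here unproved) characterization that $T_i\in\GL(m_i,\Q_{p_i})$ is expansive if and only if no eigenvalue has absolute value $1$, together with the rationality of the contracting/expanding subspace decomposition over $\Q_{p_i}$; you should either prove this or cite it (it is available in \cite{GR17} or \cite{Wa84}), whereas the paper's appeal to \cite{GW10} packages that linear algebra and is reused later in the proof of \Cref{main}. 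The remaining steps (restriction of expansivity to closed invariant subgroups via \Cref{rem1}, passage to $\Sub_{\Q_{p_i}^{m_i}}$ via \Cref{subg-q}, and the application of \Cref{p-adic}) match the paper.
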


\begin{proof} $(1):$ Suppose $T$ is expansive on $G$ as in (1). Then $C(T)C(T^{-1})$ is open in $G$ \cite[Lemma 1.1\,(d)]{GR17}.
Since $G$ is not discrete, either $C(T)$ or $C(T^{-1})$ is nontrivial.  Suppose $C(T)$ is nontrivial. Since $G$ is torsion-free, 
by Theorem B of \cite{GW10}, $C(T)$ is a direct product of $T$-invariant $\Q_{p_i}^{l_i}$, for some prime $p_i$ and $l_i\leq m_i$, 
and hence $C(T)$ is closed. Similarly, either $C(T^{-1})$ is trivial, or it has a similar structure as $C(T)$ described above and it is closed. 
In particular, $C(T)\cap C(T^{-1})=\emptyset$ \cite[Theorem 3.2\,(1$\Leftrightarrow$4)]{BW04}. 
Therefore, $C(T)\times C(T^{-1})$ is an open subgroup of $G$ and it follows that $G=C(T)\times C(T^{-1})$. 
Considering the structure of $C(T)$ and $C(T^{-1}$) described above, it follows that each $\Q_{p_i}^{m_i}$ is $T$-invariant. 
Now suppose the $T$-action on $\Sub_G$ is expansive. 
Then the $T$-action on $\Sub_{\Q_{p_i}^{m_i}}$ is also expansive and by \Cref{p-adic}\,(2), $m_i=1$ for each $i$. 

\noindent $(2):$ Let $G$ be as in $(2)$. The one way implication `if' follows from \Cref{Totally-disconnected}\,$(2)$. Now suppose $T$ is expansive. From (1), 
we get that $\Q_{p_i}$ is $T$-invariant for each $i$, $G=C(T)\times C(T^{-1})$ and any one of the following holds: $C(T)=\{e\}$, $C(T)=G$ or 
$C(T)$ (resp.\ $C(T^{-1})$) is a direct product of some $\Q_{p_i}$. Now the $T$-action on $\Q_{p_i}$ is expansive for each $i$. 

Let $\pi_i:G\to \Q_{p_i}$ be the natural projection and let 
$T_i:\Q_{p_i}\to \Q_{p_i}$ be the  map corresponding to $T$, such that $\pi_i\circ T=T_i\circ\pi_i$, $i=1,\ldots, n$. 
We show that $\Sub_G=\Sub_{\Q_{p_1}}\times\cdots\times\Sub_{\Q_{p_n}}$.
It is easy to see that $\Sub_{\Q_{p_1}}\times\cdots\times\Sub_{\Q_{p_n}}\subset\Sub_G$.

Let $H$ be a closed subgroup of $G$ and let $H_i=\pi_i(H)$ for each $i$. Then $H_i$ is a subgroup of $\Q_{p_i}$ for each $i$. 
We show that $H=H_1\times \cdots\times H_n$. 
This holds if $H$ is trivial. Suppose $H$ is nontrivial. Then not all $H_i$ are trivial.
Note that $H\subset H_1\times \cdots\times H_n$.  
Let $i\in\{1,\ldots, n\}$ be fixed such that $H_i\ne\{0\}$. 
It is enough to show that $H_i\subset H$ (through the canonical inclusion of $\Q_{p_i}$ in $G$). Let $x_i\in H_i$ be nonzero.
There exists $y=(y_1, \ldots, y_n)\in H$ such that $y_i=\pi_i(y)=x_i$.
Let $l=(p_1\cdots p_n)/p_i$. Then $l\in\N$ and for all $j\ne i$, $l^k\to 0$ in $\Q_{p_j}$ as $k\to\infty$ and $l^{k}\in \Z_{p_i}^*$, 
which is a compact multiplicative group. 
It follows that there exists a sequence $\{k_m\}\subset\N$ such that $l^{k_m}\to 1$ in $\Q_{p_i}$ as $m\to\infty$. 
Now $l^{k_m}y=(l^{k_m}y_1,\ldots, l^{k_m}y_n)\to a\in H$, where $a=(a_1,\ldots, a_n)$, $a_i=y_i=x_i$ and $a_j=0$ for all $j\ne i$. 
Therefore $x_i\in H$, and hence $H_i\subset H$. Since this holds for all $i$ such that $H_i\ne\{0\}$ and $\pi_i(H)=H_i$, we get that 
$H=H_1\times\cdots\times H_n$. Now $H_i=H\cap\Q_{p_i}$ is closed for each $i$. 
This implies that $\Sub_G=\Sub_{\Q_{p_1}}\times\cdots\times\Sub_{\Q_{p_n}}$.

If $T$ acts expansively on $G$, then $T_i$ acts expansively on $\Q_{p_i}$ for each $i$.
By \Cref{p-adic}\,(1), the $T_i$-action on $\Sub_{\Q_{p_i}}$ is expansive for each $i$. 
Now the assertion holds by Theorem 2.2.5 of \cite{AH94}.
\end{proof}

Gl\"ockner and Willis have obtained a structure theorem for totally disconnected locally compact contraction groups in \cite{GW10}. 
We use this theorem along with 
\Cref{p-adic} and \Cref{n-primes} and get the following structure theorem for groups $G$ which admit a contractive automorphism 
that acts expansively on $\Sub_G$. Recall that $T$ is said to be contractive if $C(T)=G$. 
Recall also that we call a subgroup $H$ of $G$ $T$-invariant if $T(H)=H$; (in \cite{GW10}, such a group $H$ is called $T$-stable). 

\begin{theorem} \label{main}
Let $G$ be a locally compact group which admits a contractive automorphism $T$. Then $T$ acts expansively on $\Sub_G$ if and only if 
$G=\Q_{p_1}\times\cdots\times\Q_{p_n}$ for distinct primes $p_1,\ldots, p_n$ such that each $\Q_{p_i}$ is $T$-invariant. 
\end{theorem}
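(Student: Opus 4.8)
The plan is to establish the two implications separately, reducing in each case---via the structure theorem of Gl\"ockner and Willis \cite{GW10}---to the building blocks $\Q_p$ and $\Q_p^n$ already handled in \Cref{p-adic} and \Cref{n-primes}. For the `if' direction, suppose $G=\Q_{p_1}\times\cdots\times\Q_{p_n}$ with each $\Q_{p_i}$ being $T$-invariant. Then $T_i:=T|_{\Q_{p_i}}$ is a contractive automorphism of $\Q_{p_i}$, hence is multiplication by some $q_i\in\Q_{p_i}^*$ with $|q_i|_{p_i}<1$; in particular $q_i\notin\Z_{p_i}^*$, so each $T_i$ is expansive and therefore so is the finite product $T$. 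That the $T$-action on $\Sub_G$ is expansive is then immediate from \Cref{n-primes}\,$(2)$.

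For the `only if' direction, assume $T$ is contractive (so $C(T)=G$) and that the $T$-action on $\Sub_G$ is expansive. By \Cref{Totally-disconnected}\,$(1)$, $G$ is totally disconnected, so $G=C(T)$ is a totally disconnected locally compact contraction group and the structure theorem of \cite{GW10} applies: $G$ is a direct product of finitely many $T$-invariant closed $p$-components, and, up to $T$-invariant closed subquotients, these are built from additive groups of local fields, those of characteristic $0$ being $\Q_p$-analytic and those of characteristic $p$ being $\mathbb{F}_p((t))$-analytic. By \Cref{subg-q} (together with \Cref{e-basic}\,$(5)$), $T$ acts expansively on $\Sub_H$ for every $T$-invariant closed subgroup or quotient $H$ of $G$, so it suffices to rule out every piece except one-dimensional characteristic-$0$ ones.

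The crux is the positive-characteristic part. A nontrivial characteristic-$p$ component contains, as a $T$-invariant closed subquotient, a copy of the additive group of $\mathbb{F}_p((t))$ on which $T$ acts as multiplication by an element $w$ of valuation $\geq 1$ (this is where the precise form of the decomposition in \cite{GW10} enters). Unlike $\Q_p$, such a field admits, for a suitable $k\in\{1,2\}$, a proper closed subfield $F$ containing $w^k$ with $2\leq[\mathbb{F}_p((t)):F]<\infty$---namely $F=\mathbb{F}_p((w))$ when $w$ is not a uniformizer, and $F=\mathbb{F}_p((w^2))$ otherwise. Then $\mathbb{F}_p((t))$ is a finite-dimensional $F$-vector space, multiplication by $w^k=T^k$ is $F$-linear, and the infinitely many closed $F$-subspaces are $T^k$-invariant closed subgroups. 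These give infinitely many fixed points of the $T^k$-action on $\Sub$, contradicting \Cref{e-basic}\,$(3)$. Hence $G$ is torsion-free.

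With $G$ torsion-free, each $p$-component $G_p$ is a nilpotent $p$-adic contraction group (Theorem~B of \cite{GW10}). Passing to the abelianisation $A_p=G_p/\overline{[G_p,G_p]}$, a $T$-invariant quotient isomorphic to some $\Q_p^{m}$, I would argue as follows: if $m=1$ then $G_p$ is abelian (a nilpotent Lie algebra with one-dimensional abelianisation is abelian), so $G_p=A_p\cong\Q_p$; and $m\geq 2$ is impossible, since \Cref{p-adic}\,$(2)$ applied to $A_p$ via \Cref{subg-q} would contradict expansivity. Thus every nontrivial $p$-component is isomorphic to $\Q_p$, and since the components are $T$-invariant and indexed by distinct primes we obtain $G=\Q_{p_1}\times\cdots\times\Q_{p_n}$ with each factor $T$-invariant, as required. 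I expect the positive-characteristic case to be the main obstacle: one must exploit the existence of proper finite-index closed subfields of $\mathbb{F}_p((t))$---a feature with no analogue in characteristic $0$---to produce the infinite family of periodic closed subgroups that defeats expansivity.
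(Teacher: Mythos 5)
Your overall strategy matches the paper's: both directions reduce, via the Gl\"ockner--Willis structure theorem, to the $p$-adic computations of \Cref{p-adic} and \Cref{n-primes}, and your `if' direction is essentially the paper's (contractive implies expansive, then \Cref{n-primes}\,(2)). In the `only if' direction, however, you diverge at both key steps, and both of your alternatives are sound. To kill the torsion part $\T$, the paper extracts a minimal closed $T$-invariant subgroup $\T_1\cong F^{(-\N)}\times F^{\N_0}$ with the shift action, exhibits the uncountable family of compact subgroups of $F^{\N_0}$ whose $T$-orbits all converge to $\{e\}$ in $\Sub_{\T_1}$, and invokes Reddy's theorem \cite{Re70}; you instead pass to a copy of $\mathbb{F}_p((t))$ on which $T$ acts as multiplication by the uniformizer, view $\mathbb{F}_p((t))$ as a two-dimensional vector space over the closed subfield $\mathbb{F}_p((t^2))$, and note that the infinitely many $\mathbb{F}_p((t^2))$-lines are closed subgroups fixed by $T^2$, contradicting \Cref{e-basic}\,(3). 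Your version is more elementary (it needs only the finite-fixed-point property rather than Reddy's theorem) and it isolates exactly the feature of positive characteristic that is responsible, namely the existence of proper finite-index closed subfields. For the torsion-free part, the paper climbs the upper central series of the unipotent group $G_p$ (forcing $Z\cong\Q_p$, then $Z^{(1)}\cong\Q_p^2$, then a contradiction via \Cref{p-adic}\,(2)), whereas you descend to the abelianisation and use that a nilpotent Lie algebra with one-dimensional abelianisation is one-dimensional; your route is shorter and avoids the one-parameter-subgroup argument.

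One soft spot: your paraphrase of the \cite{GW10} decomposition overstates what it gives for the torsion part. The composition factors of $\T$ are $F^{(-\N)}\times F^{\N_0}$ for finite \emph{simple} groups $F$ with the shift, and for non-abelian $F$ these are not additive groups of local fields, so $\T$ is not in general ``built from $\mathbb{F}_p((t))$-analytic groups.'' The reduction you need is still available: take the bottom term $G_1\cong F^{(-\N)}\times F^{\N_0}$ of a composition series of $\T$, choose a cyclic subgroup $E\leq F$ of prime order $p$, and observe that $E^{(-\N)}\times E^{\N_0}$ is a closed shift-invariant subgroup of $G_1$ isomorphic as a contraction group to $\mathbb{F}_p((t))$ with multiplication by $t$. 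But this step has to be said explicitly, and once it is, only the uniformizer case of your subfield dichotomy ever occurs, so the case $F=\mathbb{F}_p((w))$ for non-uniformizer $w$ can be dropped.
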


\begin{proof} If $T$ is contractive, then $T$ is expansive \cite[Remark 1.10]{GR17}. One way implication `if' follows  from 
Corollary \ref{n-primes}\,$(2)$. (In fact we do not need $\Q_{p_i}$ to be $T$-invariant). Now we assume that $T$ is contractive and 
acts expansively on $\Sub_G$. By Proposition \ref{Totally-disconnected}\,$(1)$, $G$ is totally disconnected. 
By Theorem B of \cite{GW10}, $G=\T\times\,\D$, where $\T$ and $\D$ are closed $T$-invariant groups, $\T$ is the torsion group, 
$\D$ is the group of divisible elements in $G$ and $\D$ is a direct product
of $T$-invariant, nilpotent $p$-adic Lie groups for certain primes $p$, i.e.\ 
$$
\D=G_{p_1}\times\cdots\times G_{p_n}$$
where each $G_p$ is the group of $\Q_p$-rational points of a unipotent linear
algebraic group defined over $\Q_p$, by Theorem 3.5 (ii) of \cite{Wa84}, for $p=p_1,\ldots, p_n$. Suppose $n>1$. 
(We will refer to $G_p$ itself as a unipotent $p$-adic algebraic group). 

We first show that $\T$ is trivial. If possible, suppose $\T$ is nontrivial. Then $\T$ admits a nontrivial closed $T$-invariant subgroup 
$\T_1$ such that $\T_1$ has no proper closed $T$-invariant subgroup and $\T_1\cong F^{(-\N)}\times F^{\N_0}$, a restricted direct product for a 
finite group $F$, with $T|_{\T_1}$ as the right-shift \cite[Theorem 3.3 and Theorem A\,(a)]{GW10}. 
Recall that $\N_0=\N\cup\{0\}$ and $\Z=\N_0\cup -\N$, $F$ is a finite group with the discrete topology, $F^{\N_0}=\prod_{n\in\N_0}F$, 
endowed with the compact product topology. Recall also that $F^{(-\N)} \subset F^{-\N}$ is a subgroup consisting of
 $(x_n)_{n\in-\N}$ with $x_n=1$ for all 
but finitely many $n\in-\N$, where 1 is the identity element of the finite group $F$. Since $\T_1$ is nontrivial, so is $F$.
Let $\HH=\{H\subset F^{\N_0}\mid H\mbox{ is a compact group}\}$. Since the set of subsets of $\N_0$ is uncountable, it implies that $\HH$ is uncountable.
Moreover, as $T((x_n))=((x_{n-1}))$ for all $(x_n)\in \T_1$, we get that $T^n(H)\to\{e\}$ in $\Sub_{\T_1}$ for all $H\in\HH$ \cite[Proposition 2.1]{Wa84}. 
By Theorem 1 of \cite{Re70}, $T|_{\T_1}$ is not expansive. This leads to a contradiction. Therefore, $\T$ is trivial and $G=\D$. 

We show that $\D=\Q_{p_1}\times\cdots\times\Q_{p_n}$.
Let $p$ be a fixed prime in $\{p_1,\ldots,p_n\}$. Then $G_p$ is a $T$-invariant unipotent $p$-adic algebraic group, $T|_{G_p}$ is contractive
and the $T$-action on $\Sub_{{G_p}}$ is expansive. It is enough to show that $G_p=\Q_p$. 

Let $Z$ be the center of $G_p$. Then $Z$ is $T$-invariant and $Z\cong\Q_p^m$ for some $m\in\N$. 
By \Cref{p-adic}\,$(2)$, we get that $Z\cong\Q_p$. If $G_p$ is abelian, then $G_p=\Q_p$. If possible, suppose $G_p$ is not abelian. 
Let $Z^{(1)}$ be the closed subgroup of $G_p$ such that $Z\subset Z^{(1)}$ and $Z^{(1)}/Z$ is the center of $G_p/Z$. Then $Z^{(1)}$ is $T$-invariant and 
it is also a unipotent $p$-adic algebraic subgroup of $G_p$ and, $Z^{(1)}/Z$ is abelian and isomorphic to $\Q_p^{m_1}$, for some $m_1\in\N$. 
By \Cref{subg-q}, the 
$T$-action on $\Sub_{G_p/Z}$, and hence on $\Sub_{Z^{(1)}/Z}$ is expansive. By \Cref{p-adic}\,$(2)$, $Z^{(1)}/Z$ is isomorphic to $\Q_p$. 
Let $x\in Z^{(1)}\setminus Z$. Since $Z^{(1)}$ is unipotent, we have $\{x_t\}_{t\in\Q_p}\subset Z^{(1)}$, the one-parameter subgroup with 
$x=x_1$. As $Z^{(1)}/Z$ is isomorphic to $\Q_p$, we get that $Z^{(1)}=\{x_t\}_{t\in\Q_p}Z$, and it is abelian and isomorphic to $\Q_p^2$. 
Since $Z^{(1)}$ is $T$-invariant, by \Cref{p-adic}\,(2), the $T$-action on $\Sub_{Z^{(1)}}$ is not expansive, which leads to a contradiction. 
Hence $G_p$ is abelian and isomorphic to $\Q_p$.
\end{proof}

\begin{remark} \label{rem3} If a locally compact group $G$ admits an automorphism $T$ which acts expansively on $\Sub_G$, then we have that 
$G$ is totally disconnected, $C(T)C(T^{-1})$ is open in $G$ and  $C(T)$ and $C(T^{-1})$ are closed, and hence locally compact. This implies 
in particular that such $G$ is either discrete, or at least one of $C(T)$ or $C(T^{-1})$ is nontrivial and such a nontrivial 
contraction group is a direct product of finitely many $\Q_{p_i}$ for distinct primes $p_i$ (cf.\ \Cref{main}). It would be interesting to study in detail 
the structure of $C(T)C(T^{-1})$ for such $T$. It would also be interesting to study the structure of (infinite) discrete groups $G$ admitting automorphisms 
which act expansively on $\Sub_G$. 
\end{remark}

\smallskip
\noindent{\bf Acknowledgements:} 
R.\ Shah would like to acknowledge the MATRICS research Grant from DST-SERB (Grant No. MTR/2017/000538), Govt.\ of India. She would also like to acknowledge the support in part by the
International Centre for Theoretical Sciences (ICTS) during a visit for participating in the program Smooth and Homogeneous Dynamics (Code: ICTS/etds2019/09). M.\ B.\ Prajapati would like to acknowledge the UGC-JRF research fellowship from UGC (Grant No. 413318), Govt.\ of India.

\bibliographystyle{amsplain}

\bigskip\medskip
\noindent{
\advance\baselineskip by 2pt
\begin{tabular}{ll}
Riddhi Shah & \hspace*{3cm}Manoj B. Prajapati \\
School of Physical Sciences & \hspace*{3cm}School of Physical Sciences\\
Jawaharlal Nehru University & \hspace*{3cm}Jawaharlal Nehru University\\
New Delhi 110 067, India & \hspace*{3cm}New Delhi 110 067, India\\
rshah@jnu.ac.in& \hspace*{3cm}manoj.prajapati.2519@gmail.com\\
 riddhi.kausti@gmail.com& \hspace*{3cm}{}\\
\end{tabular}}

\end{document}